\def\esp{\mathbb E}
\def\osc{\mathrm{osc}}
\def\pr{\mathbb P}
\def\var{\mathrm{var}}
\def\rmd{\mathrm{d}}
\def\rme{\mathrm{e}}
\newcommand\1[1]{\mathbbm{1}_{#1}}
\def\sneps{\bar{S}_n^{<\epsilon}}
\def\disc{\mathrm{Disc}}
\def\pointmass{\boldsymbol{\delta}}
\def\di{\mathcal D_I}
\def\si{\mathcal S_I}
\def\dil{\mathcal D_{I}^{\ell}}
\def\sil{\mathcal S_{I,\ell}}
\def\jl{J_1^\ell}%weak product topology
\def\csms{\mathcal{X}}
\def\csmssil{\mathcal{Y}_{I,\ell}}
\def\csmsdi{\mathcal{X}_I}
\def\mplusx{\mathcal M(\csms)}
\def\mplusxdil{\mathcal M(\csmsdi^\ell)}
\def\mplusxsil{\mathcal M(\csmssil)}
\def\mplusxp{\mathcal N(\csms)}
\def\mplusxf{\mathcal M_f(\csms)}
\def\mplusxpdi{\mathcal N(\csmsdi)}
\def\mplusxpdil{\mathcal N(\csmsdi^\ell)}
\def\mplusxpdisc{\mathcal N^*(\csmsdi)}
\def\mplusxpdiscl{\mathcal N^*(\csmsdi^\ell)}
\newcommand{\weakdiese}{\to_{w^{\#}}}
\newcommand\normi[1]{\left\|{#1}\right\|_{I}}
\newcommand\normil[1]{\left\|{#1}\right\|_{I,\ell}}
\newcommand{\lebesgue}{\mathrm{Leb}}
\newtheorem{theorem}{Theorem}[section]
\newaliascnt{definition}{theorem}
\newaliascnt{corollary}{theorem}
\newtheorem{corollary}[corollary]{Corollary}
\newaliascnt{proposition}{theorem}
\newtheorem{proposition}[proposition]{Proposition}
\newaliascnt{lemma}{theorem}
\newtheorem{lemma}[lemma]{Lemma}
\theoremstyle{break}
\newaliascnt{example}{theorem}
\newtheorem{example}[example]{Example}
\newaliascnt{remark}{theorem}
\newtheorem{remark}[remark]{Remark}
\begin{document}

\title{Convergence to stable laws in the space $D$}

\author{François Roueff\thanks{Telecom Paristech, Institut Telecom, CNRS LTCI, 46, rue
    Barrault, 75634 Paris Cedex 13, France} \and Philippe
  Soulier\thanks{Universit\'e de Paris Ouest, 92000 Nanterre,
    France. Corresponding author.}}

\date{}

\maketitle

\begin{abstract}
  We study the convergence of centered and normalized sums of i.i.d. random
  elements of the space $\mathcal{D}$ of càdlàg functions endowed with
  Skorohod's $J_1$ topology, to stable distributions in $\mathcal D$. Our
  results are based on the concept of regular variation on metric spaces and on
  point process convergence. We provide some applications, in particular to the
  empirical process of the renewal-reward process.
\end{abstract}

\section{Introduction and main results}
\label{sec:intro}
The main aim of this paper is to study the relation between regular variation in
the space $\di$ and convergence to stable processes in $\di$. Let us first describe the
framework of regular variation on metric spaces introduced by
\cite{hult:lindskog:2005,hult:lindskog:2006}. Let $I$ be a nonempty closed
subinterval of $\mathbb R$. We denote by $\di$ the set of real valued càdlàg
functions defined on $I$, endowed with the $J_1$ topology.  Let $\si$ be the
subset of $\di$ of functions $x$ such that
$$
\normi{x} = \sup_{t\in I} |x(t)| = 1 \; .
$$ 
A random element $X$ in $\di$ is said to be regularly varying if there exists
$\alpha>0$, an increasing sequence $a_n$ and a probability measure $\nu$ on
$\si$, called the spectral measure, such that
\begin{align}  
  \label{eq:reg-var-metric}
  \lim_{n\to\infty} n \, \pr \left( \normi{X} > a_n x \; , \ \frac{X}{\normi{X}}
    \in A \right) = x^{-\alpha} \nu(A) \; ,
\end{align}
for any Borel set $A$ of $\si$ such that $\nu(\partial A)=0$ where $\partial
A$ is the topological boundary of $A$. Then $\normi{X}$ has a regularly varying
right-tail, the sequence $a_n$ is regularly varying at infinity with index
$1/\alpha$ and satisfies $\pr(\normi{X}>a_n) \sim 1/n$.
\citet[Theorem~10]{hult:lindskog:2006} states that~(\ref{eq:reg-var-metric}) is
equivalent to the regular variation of the finite dimensional marginal
distributions of the process $X$ together with a certain tightness
criterion.

In the finite dimensional case, it is well-known that if $\{X_n\}$ is an
i.i.d. sequence of finite dimensional vectors whose common distribution is
multivariate regularly varying, then the sum $\sum_{i=1}^n X_i$, suitably
centered and normalized converge to an $\alpha$-stable distribution.  In
statistical applications, such sums appear to evaluate the asymptotic behavior
of an empirical estimator around its mean. Therefore we will consider centered
sums and we shall always assume that $1<\alpha<2$. The case $\alpha\in(0,1)$ is
actually much simpler. Very general results in the case $\alpha\in(0,1)$ can be
found in \cite{davydov:molchanov:zuyev:2008}.
In this case no centering is needed to
ensure the absolute convergence of the series representation of the limiting
process. In contrast if $\alpha\in(1,2)$, the centering raises additional
difficulties. This can be seen in \cite{resnick:1986}, where the point process
of exceedances has been first introduced for deriving the asymptotic behavior of
the sum $S_n=\sum_{i=1}^{n} X_{i,n}$, for $X_{i,n}=Y_i\1{[i/n,1]}$ with the
$Y_i$'s i.i.d. regularly varying in a finite-dimensional space. A thinning of
the point process has to be introduced to deal with the centering. In this
contribution, we also rely on the point process of exceedances for more general
random elements $X_{i,n}$ valued in $\di$. Our results include the case treated in
\cite[Proposition~3.4]{resnick:1986}, see \autoref{subsec:invariance}. However
they do not require the centered sum $S_n-\esp[S_n]$ to be a Martingale and the
limit process that we obtain is not a Lévy process in general, see the 
other two examples treated in \autoref{sec:applis}. Hence Martingale type arguments
as in \cite{jacod:shiryaev:2003} cannot be used.  
Our main result is the following.

\begin{theorem}
  \label{theo:conv-stable-D}
  Let $\{X_i\}$ be a sequence of i.i.d. random elements of $\di$ with the same
  distribution as $X$ and assume that~(\ref{eq:reg-var-metric}) holds with $1 <
  \alpha < 2$.  Assume moreover
\begin{enumerate}[label={\rm (A-\roman*)}]
\item \label{item:no-fix} For all $t\in I$, $\nu(\{x\in\si\;,\; t\in\disc(x)\})=0$. 
\item\label{item:tightness<epsilon} For all $\eta>0$, we have
  \begin{align}
    \label{eq:tightness<epsilon}
    \lim_{\epsilon\downarrow0} \limsup_{n\to\infty} \pr \left(  \normi{\sum_{i=1}^n
      \left(X_i \1{\{\normi{X_i} \leq a_n \epsilon\}} - \esp[X \1{\{\normi{X}
        \leq a_n \epsilon\}}]\right)}  > a_n \eta \right) = 0 \; .
  \end{align}
\end{enumerate}
Then $a_n^{-1} \sum_{i=1}^n \{X_i -\esp[X]\}$ converges weakly in $(\di,J_1)$ to
an $\alpha$-stable process $\aleph$, that admits the integral representation
\begin{align} 
 \label{eq:repres-mesure}
  \aleph(t) = c_\alpha \int_{\si} w(t) \;\rmd M(w)\;,
\end{align}
where $M$ is an $\alpha$-stable independently scattered random measure on $\si$
with control measure $\nu$ and skewness intensity $\beta\equiv1$ (totally
skewed to the right) and $c_\alpha^\alpha  = \Gamma(1-\alpha) \cos(\pi\alpha/2)$. 
\end{theorem}

\begin{remark}\label{rem:iid-no-common jumps}
For $x\in\di$, let the sets of
discontinuity points of $x$ be denoted by $\disc(x)$. If $x$ and $y$ are two
functions in $\di$, then, for the $J_1$ topology, addition may not be continuous at $(x,y)$ if
$\disc(x)\cap\disc(y)\ne\emptyset$.
  Condition \ref{item:no-fix} means that if $W$ is a random element of $\si$
  with distribution $\nu$ then, for any $t\in I$, $\pr(t\in\disc(W)) =0$; i.e.
  $W$ has no fixed jumps. See \citet[p.~286] {kallenberg:2002}.  Condition
  \ref{item:no-fix} also implies that $\nu\otimes\nu$-almost all
  $(x,y)\in\mathcal \si\times\si$, $x$ and $y$ have no common jumps.  Equivalently, if
  $W$ and $W'$ are i.i.d. random elements of $\si$ with distribution $\nu$,
  then, almost surely, $W$ and $W'$ have no common jump. This implies that if
  $W_1,\dots,W_n$ are i.i.d. with distribution $\nu$, then, almost surely,
  addition is continuous at the point $(W_1,\dots,W_n)$ in
  $(\di,J_1)^n$. Cf. \citet[Theorem~4.1]{whitt:1980}.
\end{remark}

It will be useful to extend slightly \autoref{theo:conv-stable-D} by considering
triangular arrays of independent multivariate càdlàg processes.

To deal with $\ell$-dimensional càdlàg functions, for some positive integer
$\ell$, we endow $\dil$ with $\jl$ , the product $J_1$-topology, sometimes
referred to as the weak product topology (see \cite{whitt:2002}).  We then let
$\sil$ be the subset of $\dil$ of functions $x=(x_1,\dots,x_\ell)$ such that
$$
\normil{x} = \max_{i=1,\dots,\ell}\;\;\sup_{t\in I} |x_i(t)| = 1 \; .
$$ 
Note that in the multivariate setting, we have 
$$
\disc(x)=\bigcup_{i=1,\dots,\ell}\disc(x_i)\;.
$$ 
We will prove the following slightly more general result.

\begin{theorem}
  \label{theo:conv-stable-D-array}
  Let $(m_n)$ be a nondecreasing sequence of integers tending to infinity. Let $\{X_{i,n},\, 1\leq
  i\leq m_n\}$ be an array of independent random elements of $\dil$. Assume
  that there exists $\alpha\in(1,2)$ and a probability measure $\nu$ on the
  Borel sets of $(\sil,\jl)$ such that $\nu$ satisfies
  Condition~\ref{item:no-fix} and, for all $x>0$ and Borel sets $A$ such that
  $\nu(\partial A)=0$,
  \begin{align} \label{eq:reg-var-metric-arrays-lim} \lim_{n\to\infty}
    \sum_{i=1}^{m_n} \, \pr \left( \normil{X_{i,n}} > x \; , \
      \frac{X_{i,n}}{\normil{X_{i,n}}} \in A \right) & = x^{-\alpha} \nu(A) \; , \\
    \label{eq:reg-var-metric-arrays-sup}
    \lim_{n\to\infty}\;\; \max_{i=1,\dots,m_n} \, \pr \left( \normil{X_{i,n}} > x \right) & = 0 \; , \\
    \label{eq:reg-var-metric-arrays-unif-integ}
    \lim_{x\to\infty}\limsup_{n\to\infty} \sum_{i=1}^{m_n} \, \esp \left[
      \normil{X_{i,n}}\,\1{\{\normil{X_{i,n}} > x\}}\right] & = 0 \; .
\end{align}
Suppose moreover that, for all $\eta>0$, we have
  \begin{align}
    \label{eq:tightness<epsilon-array}
    \lim_{\epsilon\downarrow0} \limsup_{n\to\infty} \pr \left(  \normil{\sum_{i=1}^{m_n}
      \left( X_{i,n} \1{\{\normil{X_{i,n}}\leq  \epsilon\}} - \esp[X_{i,n} \1{\{\normil{X_{i,n}}
        \leq \epsilon\}}]\right)}  >  \eta \right) = 0 \; .
  \end{align}
Then $\sum_{i=1}^{m_n} \{X_{i,n} -\esp[X_{i,n}]\}$ converges weakly in $(\dil,\jl)$
to an $\ell$-dimensional $\alpha$-stable process $\aleph$, that admits the integral representation
given by~(\ref{eq:repres-mesure}) with $\si$ replaced by $\sil$.
\end{theorem}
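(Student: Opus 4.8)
The plan is to derive the functional limit from the convergence of the associated point processes of exceedances, and then to handle separately the large- and small-jump contributions, the latter being controlled by the centering hypothesis forced by $\alpha\in(1,2)$. First I would introduce the point processes $N_n=\sum_{i=1}^{m_n}\pointmass_{X_{i,n}}$ on the punctured cone $\csmsdi^\ell$ and show, using the null-array assumptions \eqref{eq:reg-var-metric-arrays-lim} and \eqref{eq:reg-var-metric-arrays-sup}, that $N_n$ converges in distribution, for the $w^{\#}$-topology on $\mplusxpdil$, to a Poisson point process $N=\sum_j\pointmass_{P_j}$ whose intensity measure $\mu$ is, in the polar decomposition $x=\normil{x}\cdot(x/\normil{x})$, the product of $\alpha\,r^{-\alpha-1}\,\rmd r$ on $(0,\infty)$ with $\nu$ on $\sil$. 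This is the classical Poisson limit theorem for null arrays (see \citet{kallenberg:2002}), hypotheses \eqref{eq:reg-var-metric-arrays-lim}--\eqref{eq:reg-var-metric-arrays-sup} being exactly the convergence of the intensities together with the infinitesimality condition.

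The core of the argument is to realise the centered sum as a functional of $N_n$. Fixing a truncation level $\epsilon>0$, I would split $\sum_{i}(X_{i,n}-\esp[X_{i,n}])$ into a large-jump part $\sum_{i}\big(X_{i,n}\1{\{\normil{X_{i,n}}>\epsilon\}}-\esp[X_{i,n}\1{\{\normil{X_{i,n}}>\epsilon\}}]\big)$ and the small-jump part appearing in \eqref{eq:tightness<epsilon-array}. The large-jump part is the image of $N_n$ under the summation functional that adds the points of norm exceeding $\epsilon$ and subtracts the deterministic centering, whose limit is supplied by \eqref{eq:reg-var-metric-arrays-lim}. By the continuous mapping theorem it converges to the centered Poisson sum $\aleph^{>\epsilon}=\sum_{j}P_j\1{\{\normil{P_j}>\epsilon\}}-b_\epsilon$, provided the functional is continuous $w^{\#}$-almost everywhere under the law of $N$. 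The main obstacle is precisely this continuity in the product $J_1$ topology: addition in $(\dil,\jl)$ fails to be continuous at configurations with common jump times. Here Condition \ref{item:no-fix} is decisive — through \autoref{rem:iid-no-common jumps} it ensures that the finitely many points $P_j$ of norm exceeding $\epsilon$ almost surely have pairwise disjoint discontinuity sets, so that addition is a.s.\ continuous at $(P_{j_1},\dots,P_{j_k})$; moreover the restriction-to-$\{\normil{x}>\epsilon\}$ map is a.s.\ continuous because the radial part of $\mu$ is non-atomic, so $N$ a.s.\ charges no point on $\{\normil{x}=\epsilon\}$.

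It then remains to let $\epsilon\downarrow0$. I would show that $\aleph^{>\epsilon}$ converges weakly in $(\dil,\jl)$, as $\epsilon\downarrow0$, to the $\alpha$-stable process $\aleph$ of \eqref{eq:repres-mesure}. Both bounds on $\alpha$ enter here: since $\alpha>1$ the large-jump mean $\int_{\epsilon}^{\infty}\!\int_{\sil} r\,w\,\alpha r^{-\alpha-1}\,\rmd r\,\nu(\rmd w)$ is finite while the integral down to $0$ diverges, which is why centering is unavoidable; since $\alpha<2$ the second moment $\int_{0}^{\epsilon} r^{1-\alpha}\,\rmd r$ is finite, yielding the $L^2$-summability of the centered small contributions. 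Together with the uniform-integrability condition \eqref{eq:reg-var-metric-arrays-unif-integ}, which controls the centering $b_\epsilon$, this makes the LePage-type series converge and identifies its law as the stable integral with control measure $\nu$ and skewness $\beta\equiv1$, the constant $c_\alpha$ arising from the one-dimensional stable normalisation.

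Finally I would combine the three facts — convergence in $n$ for each fixed $\epsilon$, convergence of the limit as $\epsilon\downarrow0$, and the asymptotic negligibility \eqref{eq:tightness<epsilon-array} of the small-jump part — through the standard triangular approximation theorem for weak convergence, to conclude that $\sum_{i}(X_{i,n}-\esp[X_{i,n}])$ converges weakly in $(\dil,\jl)$ to $\aleph$. \autoref{theo:conv-stable-D} is then the special case $m_n=n$, $X_{i,n}=a_n^{-1}X_i$, for which \eqref{eq:reg-var-metric-arrays-lim}--\eqref{eq:reg-var-metric-arrays-unif-integ} follow from \eqref{eq:reg-var-metric} together with Karamata's theorem and $1<\alpha<2$.
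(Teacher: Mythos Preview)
Your strategy is essentially the paper's own: point-process convergence (\autoref{prop:RVpoitmeasureconv-array}), truncation into large and small jumps, continuous mapping for the large-jump sum (\autoref{theo:summation}), convergence as $\epsilon\downarrow0$ of the centered Poisson sum, and the triangular approximation via Billingsley's Theorem~4.2. Condition~\ref{item:no-fix} is invoked just as you say, to ensure a.s.\ continuity of addition at the limiting configuration of points.

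There is, however, a real gap in your treatment of the centering. You write that the large-jump part is the image of $N_n$ under a functional that ``subtracts the deterministic centering, whose limit is supplied by~\eqref{eq:reg-var-metric-arrays-lim}''. But the centering $\sum_{i}\esp\big[X_{i,n}\1{\{\normil{X_{i,n}}>\epsilon\}}\big]$ is a deterministic element of $\dil$ depending on~$n$, not a functional of the random measure~$N_n$, and~\eqref{eq:reg-var-metric-arrays-lim} gives only vague convergence of intensity measures, not $J_1$-convergence of that function. Since the expectation map $\xi\mapsto\int x\,\xi(\rmd x)$ is \emph{not} continuous on $(\di,J_1)$ --- the paper devotes \autoref{subsec:conv-expec} to precisely this issue, with counter-examples \autoref{exple:J1-conter-exple} and~\autoref{exple:M1-conter-exple} --- this step is not automatic. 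The paper handles it by first proving $S_n^{>\epsilon}\Rightarrow Z_\epsilon$ for the \emph{uncentered} large-jump sum, and then invoking \autoref{lem:esp-cont}/\autoref{cor:esp-cont}: because $Z_\epsilon$ has no fixed jump (a consequence of~\ref{item:no-fix}) and $\{\normil{S_n^{>\epsilon}}\}_n$ is uniformly integrable (this is the actual role of~\eqref{eq:reg-var-metric-arrays-unif-integ}, not merely ``controlling~$b_\epsilon$''), one obtains $\esp[S_n^{>\epsilon}]\to\esp[Z_\epsilon]$ in $(\dil,\jl)$ with a \emph{continuous} limit, so that addition at $(Z_\epsilon,-\esp[Z_\epsilon])$ is continuous. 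You should make this step explicit; as written, your argument would fail in the situations of \autoref{exple:J1-conter-exple} and~\autoref{exple:M1-conter-exple}.
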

\begin{remark}
  \label{rem:iid-case}
  If $m_n=n$ and $\normil{X_{i,n}}= Y_i /a_n$ where $\{Y_i,\,i\geq1\}$ is an
  i.i.d. sequence and Condition~(\ref{eq:reg-var-metric-arrays-lim}) holds,
  then the common distribution of the random variables $Y_i$ has a regularly
  varying right tail with index $\alpha$. It follows
  that~(\ref{eq:reg-var-metric-arrays-sup}) trivially holds
  and~(\ref{eq:reg-var-metric-arrays-unif-integ}) holds by Karamata's Theorem.
  Note also that, obviously, if moreover $X_{i,n}=X_i/a_n$ with
  $\{X_i,\,i\geq1\}$ an i.i.d. sequence valued in $\dil$, then
  Condition~(\ref{eq:reg-var-metric-arrays-lim}) is equivalent to the regular
  variation of the common distribution of the $X_i$s.
\end{remark}
Hence \autoref{theo:conv-stable-D} is a special case of
\autoref{theo:conv-stable-D-array} which shall be proved in
\autoref{sec:proof-main-theo}.

We conclude this section with some comments about the $\alpha$-stable limit
appearing in \autoref{theo:conv-stable-D} (or
\autoref{theo:conv-stable-D-array}).  Its finite dimensional distributions are
defined by the integral representation~(\ref{eq:repres-mesure}) and only depend
on the probability measure $\nu$. If $X/\normi{X}$ is distributed according to
$\nu$ and is independent of $\normi{X}$, as in \autoref{subsec:stable}, then
Assumption~(\ref{eq:reg-var-metric}) holds straightforwardly and, provided that
the negligibility condition ~\ref{item:tightness<epsilon} holds, a byproduct of
\autoref{theo:conv-stable-D} is that the integral
representation~(\ref{eq:repres-mesure}) admits a version in $\di$. The existence
of c\`adl\`ag versions of $\alpha$-stable processes is also a byproduct of the
convergence in $\di$ of series representations as recently investigated by
\cite{davydov:dombry:2012} and \cite{basse:rosinski:2012}. We will come back
later to this question in~\autoref{subsec:stable} below. For now, let us state
an interesting consequence of the Itô-Nisio theorem proved in
\cite{basse:rosinski:2012}.
\begin{lemma}
  \label{lem:series-rep}
  Let $\alpha\in(1,2)$, $\nu$ be a probability measure on $\si$ and $\aleph$ be
  a process in $\di$ which admits the integral
  representation~(\ref{eq:repres-mesure}).  Let $\{\Gamma_i,\,i\geq1\}$ be the
  points of a unit rate homogeneous Poisson point process on $[0,\infty)$ and
  $\{W,\,W_i,\,i\geq1\}$ be a sequence of i.i.d. random elements of $\si$ with
  common distribution $\nu$, independent of $\{\Gamma_i\}$.  Then $\esp[W]$
  defined by $\esp[W](t)=\esp[W(t)]$ for all $t\in I$ is in $\di$ and the series
  $\sum_{i=1}^\infty \{\Gamma_i^{-1/\alpha} W_i - \esp[\Gamma_i^{-1/\alpha}] \,
  \esp[W]\}$ converges uniformly almost surely in $\di$ to a limit having the
  same finite dimensional distribution as $\aleph$.
\end{lemma}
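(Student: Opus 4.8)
The plan is to establish the two assertions of the lemma separately: first the regularity of the mean function $\esp[W]$, then the almost sure uniform convergence of the series together with the identification of its finite-dimensional laws.

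First I would check that $\esp[W]\in\di$. Since $W$ is valued in $\si$ we have $|W(t)|\le\normi{W}=1$ for every $t\in I$, almost surely, so $t\mapsto\esp[W(t)]$ is a well-defined bounded function. To see that it is càdlàg I argue by dominated convergence along the paths of $W$: as $s\downarrow t$, right-continuity of the càdlàg path gives $W(s)\to W(t)$ a.s., and the uniform bound $|W(s)|\le1$ lets me pass to the limit, whence $\esp[W(s)]\to\esp[W(t)]$; as $s\uparrow t$, the left limit $W(t-)=\lim_{s\uparrow t}W(s)$ exists a.s., and dominated convergence again yields $\esp[W(s)]\to\esp[W(t-)]$. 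Thus $\esp[W]$ is right-continuous with left limit $\esp[W(t-)]$ at each $t$, i.e. $\esp[W]\in\di$.

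Next I would identify the series as a LePage-type representation and compute its finite-dimensional distributions. By the mapping theorem applied to the unit-rate process $\{\Gamma_i\}$ under $\gamma\mapsto\gamma^{-1/\alpha}$ and independent marking by the $W_i$, the points $\{(\Gamma_i^{-1/\alpha},W_i)\}$ form a Poisson point process on $(0,\infty)\times\si$ with intensity $\alpha r^{-\alpha-1}\,\rmd r\,\nu(\rmd w)$. Because $\alpha>1$ the moments $\esp[\Gamma_i^{-1/\alpha}]=\Gamma(i-1/\alpha)/\Gamma(i)$ are finite, and since $W_i$ is independent of $\Gamma_i$ we have $\esp[\Gamma_i^{-1/\alpha}]\esp[W]=\esp[\Gamma_i^{-1/\alpha}W_i]$, so each summand is centered and the subtracted term is exactly the compensator required for $1<\alpha<2$. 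Fixing $t_1,\dots,t_k\in I$, $\theta\in\mathbb R^k$ and writing $g(w)=\sum_{j=1}^k\theta_j w(t_j)$, the real series $\sum_i\{\Gamma_i^{-1/\alpha}g(W_i)-\esp[\Gamma_i^{-1/\alpha}]\esp[g(W)]\}$ is the centered LePage series attached to this Poisson process and converges almost surely to an $\alpha$-stable variable with characteristic exponent $\int_\si\int_0^\infty(\rme^{\rmi r g(w)}-1-\rmi r g(w))\,\alpha r^{-\alpha-1}\,\rmd r\,\nu(\rmd w)$. Evaluating the radial integral reduces this to $-c_\alpha^\alpha\int_\si|g(w)|^\alpha(1-\rmi\,\mathrm{sgn}(g(w))\tan(\pi\alpha/2))\,\nu(\rmd w)$ with $c_\alpha^\alpha=\Gamma(1-\alpha)\cos(\pi\alpha/2)$, which is precisely the characteristic exponent of $c_\alpha\int_\si g(w)\,\rmd M(w)=\sum_j\theta_j\aleph(t_j)$. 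By the Cramér--Wold device the almost sure limit of the series has the same finite-dimensional distributions as $\aleph$.

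It then remains to promote this finite-dimensional convergence to uniform almost sure convergence in $\di$, and this is the main obstacle. The summands are not independent, since the $\Gamma_i$ are the ordered partial sums of i.i.d. exponentials, and for $1<\alpha<2$ each term has infinite variance, so neither an $L^2$ maximal inequality nor a classical independent-summand Itô-Nisio argument is available. I would therefore carry this step entirely by the Itô-Nisio theorem of \cite{basse:rosinski:2012}: the above series is exactly the LePage-type series representation of the infinitely divisible (indeed $\alpha$-stable) process identified in the previous paragraph, and the hypothesis of the lemma guarantees that this process admits a version in $\di$; their theorem asserts that, for such a representation of a càdlàg infinitely divisible process, the partial sums converge uniformly almost surely in the Skorohod space. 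Applying it here yields the stated uniform almost sure convergence to a process having the same finite-dimensional distributions as $\aleph$. The crux is thus to verify that the centered series, with its ordering by decreasing $\Gamma_i^{-1/\alpha}$ and correct compensation, fits the framework of that theorem and that the càdlàg version required by it is exactly the one assumed in the statement.
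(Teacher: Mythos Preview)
Your treatment of $\esp[W]\in\di$ and of the finite-dimensional distributions is correct and aligns with the paper (which simply cites \cite[Theorem~3.9]{samorodnitsky:taqqu:1994} for the latter, while you spell out the LePage computation explicitly).

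The gap is in the uniform convergence step. You correctly flag that the summands $\Gamma_i^{-1/\alpha}W_i-\esp[\Gamma_i^{-1/\alpha}]\esp[W]$ are not independent, and you propose to invoke the It\^o--Nisio theorem of \cite{basse:rosinski:2012}; but that theorem (their Theorem~2.1) is stated precisely for \emph{independent} summands. You end by saying ``the crux is thus to verify that the centered series \dots\ fits the framework of that theorem'' --- this is exactly the missing work, and the dependence of the $\Gamma_i$ means the series does not fit directly. Appealing to a general statement about LePage representations of c\`adl\`ag ID processes is not what the paper does, and in any case you have not named such a result or checked its hypotheses.

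The paper supplies the missing idea: replace $\Gamma_i^{-1/\alpha}$ by the deterministic $i^{-1/\alpha}$. After reducing to $\esp[W]\equiv0$ (split off the scalar series $\sum_i\{\Gamma_i^{-1/\alpha}-\esp[\Gamma_i^{-1/\alpha}]\}$, which converges a.s.), set $T_n=\sum_{i\le n}i^{-1/\alpha}W_i$. These summands are independent with uniformly bounded second moments, and $\sum_i i^{-2/\alpha}<\infty$ since $\alpha<2$, so Kolmogorov's three-series theorem gives pointwise a.s.\ convergence of $T_n$. The difference $S_n-T_n$ converges uniformly a.s.\ because $\sum_i|\Gamma_i^{-1/\alpha}-i^{-1/\alpha}|<\infty$ a.s.\ (\cite[Lemma~1.5.1]{samorodnitsky:taqqu:1994}) and $\normi{W_i}=1$. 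Since by hypothesis $\aleph\in\di$ and $S_n-T_n$ converges uniformly to an element of $\di$, the pointwise limit $T_\infty$ of $T_n$ also has a version in $\di$; now Basse-O'Connor and Rosi\'nski's Theorem~2.1(ii) legitimately applies to the \emph{independent} series $T_n$, giving uniform a.s.\ convergence after centering, and uniform integrability of $\{T_\infty(t):t\in I\}$ together with their Theorem~2.1(iii) removes the centering. This two-step decomposition --- absorb the randomness of the $\Gamma_i$ via absolute summability of $\Gamma_i^{-1/\alpha}-i^{-1/\alpha}$, then apply It\^o--Nisio to the independent deterministic-coefficient series --- is the substantive argument you are deferring.
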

\begin{proof}
  The fact that $\esp[W]$ is in $\di$ follows from dominated convergence and
  $\normi{w}=1$ a.s.  The finite dimensional distributions of $S_n=\sum_{i=1}^n
  \{\Gamma_i^{-1/\alpha} W_i - \esp[\Gamma_i^{-1/\alpha}] \, \esp[W_i]\}$
  converge to those of $\aleph$ as a consequence of
  \citet[Theorem~3.9]{samorodnitsky:taqqu:1994}.  Hence, to obtain the result,
  it suffices to show that the series $\sum_{i=1}^\infty \{\Gamma_i^{-1/\alpha}
  W_i - \esp[\Gamma_i^{-1/\alpha}] \, \esp[W_i]\}$ converges uniformly a.s.
  Note that the series $\sum_{i=1}^\infty \{\Gamma_i^{-1/\alpha} -
  \esp[\Gamma_i^{-1/\alpha}] \}$ converges almost surely, thus, writing
  \begin{align*}
    \sum_{i=1}^\infty \{\Gamma_i^{-1/\alpha} W_i - \esp[\Gamma_i^{-1/\alpha}] \,
    \esp[W]\} = \sum_{i=1}^\infty \Gamma_i^{-1/\alpha} \{W_i - \esp[W]\} +
    \esp[W] \sum_{i=1}^\infty \{\Gamma_i^{-1/\alpha} -  \esp[\Gamma_i^{-1/\alpha}] \} \; ,
  \end{align*}
  we can assume without loss of generality that $\esp[W]\equiv0$. Define $T_n =
  \sum_{i=1}^n i^{-1/\alpha} W_i$. By Kolmogorov's three series theorem (see
  \citet[Theorem~4.18]{kallenberg:2002}), since $\sum_{i=1}^\infty
  i^{-2/\alpha}<\infty$ and $\var(W_i(t))\leq1$, for all $t\in I$, $T_n(t)$
  converges a.s. to a limit, say, $T_\infty(t)$.

  Arguing as in \cite{davydov:dombry:2012}, we apply
  \citet[Lemma~1.5.1]{samorodnitsky:taqqu:1994} to obtain that the series
  $\sum_{i=1}^\infty |\Gamma_i^{-1/\alpha} - i^{-1/\alpha}|$ is summable.  This
  implies that the series $\Delta=\sum_{i=1}^\infty
  (\Gamma_i^{-1/\alpha}-i^{-1/\alpha})W_i$ is uniformly convergent.  Hence
  $S_n-T_n$ converges uniformly a.s. to $\Delta$ and $\Delta\in\di$.  Thus, for
  all $t\in I$, $S_n(t)$ converges a.s. to $\Delta(t)+T_\infty(t)$.  Since the
  finite distributions of $S_n$ converge weakly to those of $\aleph$, which
  belongs to $\di$ by assumption, we conclude that $\Delta+T_\infty$ has a version in $\di$.
  Hence $T_\infty$ also has a version in $\di$.

  We can now apply \citet[Theorem~2.1~(ii)]{basse:rosinski:2012} and obtain
  that, suitably centered, $T_n$ converges uniformly a.s.  Moreover, for each
  $t$, we have $\esp[T_n(t)]=\esp[T(t)]=0$ and
  $$
  \esp[|T(t)|^2]=\esp[|W(t)|^2]\sum_{i=1}^\infty i^{-2/\alpha}\leq
  \sum_{i=1}^\infty i^{-2/\alpha}\;.
  $$
  Hence $\{T(t),\,t\in I\}$ is uniformly integrable. Then
  \citet[Theorem~2.1~(iii)]{basse:rosinski:2012} shows that $T_n$ converges
  uniformly a.s. without centering. Thus $S_n$ also converges almost surely
  uniformly.
\end{proof}

 \begin{corollary}\label{cor:series-rep-aleph}
   The process $\aleph$ defined in \autoref{theo:conv-stable-D} also admits the series representation
   \begin{align}
   \label{eq:series-rep}
     \aleph(t) = \sum_{i=1}^\infty \{\Gamma_i^{-1/\alpha} W_i -
       \esp[\Gamma_i^{-1/\alpha} ] \esp[W_1]\} \; ,
   \end{align}
   where $\{\Gamma_i,W_i,\,i\geq1\}$ are as in \autoref{lem:series-rep}.
   This series is almost surely uniformly convergent.
 \end{corollary}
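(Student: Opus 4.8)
The plan is to read the corollary off directly from \autoref{lem:series-rep}, the only genuine task being to confirm that the specific limit $\aleph$ delivered by \autoref{theo:conv-stable-D} satisfies the hypotheses of that lemma. Those hypotheses ask precisely that $\aleph$ be a process in $\di$ admitting the integral representation~\eqref{eq:repres-mesure} for the given $\alpha\in(1,2)$ and spectral measure $\nu$. Both properties are furnished by \autoref{theo:conv-stable-D}: the theorem produces $\aleph$ as the weak limit in $(\di,J_1)$ of $a_n^{-1}\sum_{i=1}^n\{X_i-\esp[X]\}$, so in particular $\aleph$ has a version in $\di$, and it explicitly asserts that $\aleph$ satisfies~\eqref{eq:repres-mesure}. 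Hence \autoref{lem:series-rep} applies with exactly this $\nu$ and $\alpha$.

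With the hypotheses in hand, \autoref{lem:series-rep} applies verbatim and yields that the series in~\eqref{eq:series-rep} converges almost surely uniformly in $\di$ (note that $W_1$ has law $\nu$, so that $\esp[W_1]=\esp[W]$), and that its sum, say $\aleph'$, has the same finite dimensional distributions as $\aleph$. This is already the asserted almost sure uniform convergence, and it supplies the explicit series representation claimed.

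It then remains only to record that $\aleph'$ may indeed be identified with $\aleph$, i.e. to upgrade the equality of finite dimensional distributions to equality in law on $(\di,J_1)$. For this I would invoke the standard fact that on the Skorohod space the Borel $\sigma$-field is generated by the evaluation maps $x\mapsto x(t)$, $t\in I$, so that the finite dimensional distributions form a determining class and two random elements of $\di$ with identical finite dimensional distributions have the same distribution. I do not expect any real obstacle here: the substantive work on uniform convergence and fidi identification has already been carried out inside \autoref{lem:series-rep}, and this last identification is a measure-theoretic formality about the generation of the Borel $\sigma$-field of $\di$; the no-fixed-jump condition~\ref{item:no-fix} underlying $\aleph$ merely makes the evaluations at every $t\in I$ unambiguous.
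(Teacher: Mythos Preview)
Your proposal is correct and matches the paper's approach: the corollary is stated without proof in the paper precisely because it is an immediate application of \autoref{lem:series-rep} to the process $\aleph$ produced by \autoref{theo:conv-stable-D}, and you have correctly verified the hypotheses and added the routine observation that equality of finite dimensional distributions determines the law on $(\di,J_1)$. The only quibble is your closing remark about Condition~\ref{item:no-fix} making evaluations ``unambiguous'': evaluation maps on $\di$ are always well-defined and generate the Borel $\sigma$-field regardless of fixed jumps, so that condition plays no role in this identification step.
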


It seems natural to conjecture that the limit process
in~\autoref{theo:conv-stable-D} or the sum of the series in
\autoref{lem:series-rep} is regularly varying with spectral measure $\nu$ (the
distribution of the process $W$).  However, such a result is not known to hold
generally. It is proved in \citet[Section~4]{davis:mikosch:2008} under the
assumption that $W$ has almost surely continuous paths. Under an additional
tightness condition, we obtain the following result.

\begin{lemma}
  \label{lem:spectral-measure}
  Let $\alpha\in(1,2)$, $\nu$ be a probability measure on $\si$ and $W$ be a
  random element of $\si$ with distribution $\nu$. Assume that $\esp[W]$ is
  continuous on $I$ and there exist $p\in(\alpha,2]$, $\gamma>1/2$ and a
  continuous increasing function $F$ such that, for all $s<t<u$,
  \begin{subequations}
    \begin{align}
      \esp[|\bar W(s,t)|^{p}] & \leq \{F(t)-F(s)\}^\gamma \;
      , \label{eq:tightness-spectral-1} \\
      \esp[ |\bar W(s,t)\, \bar W(t,u)|^{p}] & \leq \{F(u)-F(s)\}^{2\gamma} \;
      , \label{eq:tightness-spectral-2}
    \end{align}
  \end{subequations}
  where $\bar W(s,t) = W(t)-W(s)-\esp[W(t)-W(s)]$. Then the stable process $\aleph$ defined by the integral
  representation~(\ref{eq:repres-mesure}) admits a version in $\di$ which is
  regularly varying in the sense of~(\ref{eq:reg-var-metric}), with spectral
  measure $\nu$.
\end{lemma}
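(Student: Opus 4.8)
The plan is to represent $\aleph$ by the almost surely convergent LePage series of \autoref{lem:series-rep} and to prove the two assertions in turn: that $\aleph$ has a càdlàg version, by establishing tightness of the partial sums, and that this version is regularly varying, by isolating the largest term of the series. Let $\{\Gamma_i\}$, $\{W_i\}$ be as in \autoref{lem:series-rep}, set $\bar W_i=W_i-\esp[W]$ and $\bar W_i(s,t)=W_i(t)-W_i(s)-\esp[W(t)-W(s)]$, and consider
$$
S_n=\sum_{i=1}^n\left\{\Gamma_i^{-1/\alpha}W_i-\esp[\Gamma_i^{-1/\alpha}]\,\esp[W_1]\right\}=\sum_{i=1}^n\Gamma_i^{-1/\alpha}\bar W_i+\esp[W]\sum_{i=1}^n\left\{\Gamma_i^{-1/\alpha}-\esp[\Gamma_i^{-1/\alpha}]\right\}\;.
$$
Being finite sums of càdlàg functions, the $S_n$ lie in $\di$. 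The last term is the fixed continuous function $\esp[W]$ times a scalar series converging almost surely, so it converges uniformly to a continuous limit and is harmless for tightness; everything therefore reduces to the tightness of the centred jump part $\Sigma_n=\sum_{i=1}^n\Gamma_i^{-1/\alpha}\bar W_i$ in $(\di,J_1)$.

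The hypotheses are tailored to Billingsley's moment criterion for tightness in the Skorokhod space (see also \citet{whitt:2002}): condition~\eqref{eq:tightness-spectral-2}, with power $2\gamma$ and $\gamma>1/2$, is exactly of the required product form, but it bears on $\bar W$ rather than on $\Sigma_n$. The device is to condition on $\{\Gamma_i\}$: given $\{\Gamma_i\}$, $\Sigma_n$ is a sum with deterministic coefficients $c_i=\Gamma_i^{-1/\alpha}$ of the i.i.d.\ centred increments $\bar W_i$, and I would aim at a bound, uniform in $n$,
$$
\esp\!\left[\,|\Sigma_n(t)-\Sigma_n(s)|^{p}\,|\Sigma_n(u)-\Sigma_n(t)|^{p}\,\big|\,\{\Gamma_i\}\right]\le K(\{\Gamma_i\})\,\{F(u)-F(s)\}^{2\gamma}\;,\qquad s<t<u\;,
$$
with $K(\{\Gamma_i\})$ almost surely finite. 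In the transparent case $p=2$, expanding the fourth conditional moment organizes the surviving index pairings into three types, controlled respectively by $\esp[|\bar W(s,t)|^2]\,\esp[|\bar W(t,u)|^2]$, by $\esp[\bar W(s,t)\bar W(t,u)]^2$ and by $\esp[|\bar W(s,t)\bar W(t,u)|^2]$; by \eqref{eq:tightness-spectral-1}–\eqref{eq:tightness-spectral-2} and Cauchy--Schwarz each of these is at most $\{F(u)-F(s)\}^{2\gamma}$, while the accompanying coefficient sums are $(\sum_ic_i^2)^2$ and $\sum_ic_i^4$, both finite almost surely since $\alpha<2$ makes $\sum_i\Gamma_i^{-2/\alpha}$ (a fortiori $\sum_i\Gamma_i^{-4/\alpha}$) converge. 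This gives the display with $K=3(\sum_i\Gamma_i^{-2/\alpha})^2+\sum_i\Gamma_i^{-4/\alpha}$.

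The main obstacle is to obtain this conditional bound for a general $p\in(\alpha,2]$, where the clean fourth-moment expansion is unavailable. One needs a bivariate moment inequality for sums of independent centred $\mathbb R^2$-valued summands that retains the product of the two increments—a crude reduction to a single increment norm would treat a one-sided jump as a genuine fluctuation and ruin tightness—and whose coefficient sums are $\sum_i\Gamma_i^{-p/\alpha}$ and $\sum_i\Gamma_i^{-2/\alpha}$, convergent almost surely precisely because $p>\alpha$ and $\alpha<2$. Granting such an inequality, the conditional bound and Billingsley's criterion control the Skorokhod modulus $w''(\Sigma_n,\cdot)$ conditionally on $\{\Gamma_i\}$ by a quantity tending to $0$ with the mesh for almost every realization; dominated convergence then removes the conditioning and yields tightness of $\Sigma_n$, hence of $S_n$. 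Finally, letting $t\uparrow s$ in \eqref{eq:tightness-spectral-1} and using the continuity of $\esp[W]$ gives $W(s-)=W(s)$ almost surely, so $W$ has no fixed jump; the finite-dimensional convergence $S_n\to\aleph$ (\citet[Theorem~3.9]{samorodnitsky:taqqu:1994}, as in the proof of \autoref{lem:series-rep}) then holds at every point of $I$, and together with tightness it gives $S_n\Rightarrow\aleph$ in $(\di,J_1)$; in particular $\aleph$ admits a version in $\di$.

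Once $\aleph\in\di$, \autoref{lem:series-rep} applies and yields the almost surely uniformly convergent representation \eqref{eq:series-rep}. I would deduce \eqref{eq:reg-var-metric} with spectral measure $\nu$ from the single-large-jump structure of the Poisson points $\{(\Gamma_i^{-1/\alpha},W_i)\}$, whose intensity on $(0,\infty)\times\si$ is $\alpha r^{-\alpha-1}\rmd r\otimes\nu$. As $\{\Gamma_i\}$ increases, $\Gamma_1^{-1/\alpha}=\max_i\Gamma_i^{-1/\alpha}$, and for large $x$ the event $\{\normi{\aleph}>x\}$ is dominated by $\{\Gamma_1^{-1/\alpha}>x\}$, of probability asymptotic to $x^{-\alpha}$; on it $\aleph=\Gamma_1^{-1/\alpha}W_1+\text{remainder}$ with $\normi{W_1}=1$ and $W_1\sim\nu$, the remainder (the smaller jumps and the continuous centring) having almost surely finite $\normi{\cdot}$ by the uniform convergence of the series and thus being negligible relative to $x\to\infty$. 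Hence $\normi{\aleph}\approx\Gamma_1^{-1/\alpha}$ and $\aleph/\normi{\aleph}\approx W_1\sim\nu$, which is \eqref{eq:reg-var-metric} with index $\alpha$ and spectral measure $\nu$. Alternatively one verifies the finite-dimensional regular variation of $\aleph$, whose marginals are multivariate $\alpha$-stable with spectral measures the projections of $\nu$, together with the tightness criterion of \citet[Theorem~10]{hult:lindskog:2006}, again furnished by \eqref{eq:tightness-spectral-1}–\eqref{eq:tightness-spectral-2}.
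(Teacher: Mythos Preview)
Your single-large-jump decomposition $\aleph=\Gamma_1^{-1/\alpha}W_1+T$ is exactly the paper's route, but the justification that $T$ is negligible has a genuine gap: ``almost surely finite $\normi{T}$'' carries no tail information. To conclude that $\Gamma_1^{-1/\alpha}W_1+T$ inherits the regular variation of $\Gamma_1^{-1/\alpha}W_1$ (which is regularly varying with spectral measure $\nu$ because $\Gamma_1^{-1/\alpha}$ is Fr\'echet with index~$\alpha$ and independent of $W_1$), you need $\pr(\normi{T}>x)=o(x^{-\alpha})$; if $\normi{T}$ itself had an $\alpha$-tail the conclusion would be false. The paper obtains the required decay from the quantitative bound $\esp[\normi{T}^p]<\infty$ for the $p>\alpha$ of the hypotheses, whence Markov gives $\pr(\normi{T}>x)\le Cx^{-p}=o(x^{-\alpha})$. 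Your alternative via \citet[Theorem~10]{hult:lindskog:2006} does not escape the issue: its tightness criterion is on the tail measure of $\aleph$ itself, and checking it again requires quantitative control of $\normi{T}$ beyond a.s.\ finiteness.

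That $L^p$ bound is precisely what the bivariate inequality you flagged as ``the main obstacle'' is for. Setting $\Sigma_k=\sum_{i\ge k}\Gamma_i^{-1/\alpha}\bar W_i$, the paper conditions on $\{\Gamma_i\}$, applies Burkholder's inequality as in the proof of \autoref{coro:burkholder}, and takes expectation over the Poisson points to get
\[
\esp\bigl[|\Sigma_4(t)-\Sigma_4(s)|^{p}|\Sigma_4(u)-\Sigma_4(t)|^{p}\bigr]\le C\{F(u)-F(s)\}^{2\gamma}\,,
\]
the constant involving $\sum_{i\ge4}\esp[\Gamma_i^{-p/\alpha}]$ and $\sum_{i\ge4}\esp[\Gamma_i^{-2p/\alpha}]$; the start at $i=4$ is forced by $\esp[\Gamma_i^{-q}]<\infty\Leftrightarrow q<i$ together with $2p/\alpha<4$. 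Combined with the pointwise bound $\esp[|\Sigma_4(t)|^p]<\infty$, \citet[Chapter~15]{billingsley:1968} yields $\esp[\normi{\Sigma_4}^p]<\infty$ directly --- not merely tightness --- and the terms $i=2,3$ are then added back using $\esp[\Gamma_i^{-p/\alpha}]<\infty$ and $\normi{W_i}=1$. So the inequality you set aside is not a side issue for the c\`adl\`ag version (which the paper in fact delegates to \autoref{lemma:as-convergence-serie}); it is the engine of the regular-variation step, and it must be used to bound $\esp[\normi{\cdot}^p]$ of the remainder, not only to establish tightness of the partial sums.
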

\begin{remark}
  Our assumptions on $W$ are similar to those of
  \citet[Theorem~4.3]{basse:rosinski:2012} and
  \citet[Theorem~1]{davydov:dombry:2012}, with a few minor differences. For
  instance Conditions~(\ref{eq:tightness-spectral-1})
  and~(\ref{eq:tightness-spectral-2}) are expressed on a non-centered $W$ in
  these references. Here we only require $\esp[W]$ to be continuous, which,
  under~(\ref{eq:tightness-spectral-1}), is equivalent to Condition
  \ref{item:no-fix}.  Indeed take a random element $W$ in $\si$. Then, by
  dominated convergence, $\esp[W]$ is in $\di$.
  Condition~(\ref{eq:tightness-spectral-1}) implies that $W-\esp[W]$ has no pure
  jump. Thus under~(\ref{eq:tightness-spectral-1}), the process~$W$ has no pure
  jump if and only if $\esp[W]$ is continuous on $I$.
\end{remark}
\begin{proof}
  A straightforward adaptation of the arguments of the proof of
  \citet[Theorem~1]{davydov:dombry:2012} to the present context shows that the
  stable process $\aleph$ defined by~(\ref{eq:repres-mesure}) admits a version
  in $\di$. This fact is also a consequence of
  \autoref{lemma:as-convergence-serie} below, so we omit the details of the
  adaptation.

  Therefore, we only have to prove that the c\`adl\`ag version of $\aleph$,
  still denoted $\aleph$, is regularly varying in the sense
  of~(\ref{eq:reg-var-metric}), with spectral measure $\nu$. By
  \autoref{cor:series-rep-aleph}, $\aleph$ can be represented as the almost
  surely uniformly convergent series
$$
\sum_{i=1}^\infty \{\Gamma_i^{-1/\alpha} W_i - \esp[\Gamma_i^{-1/\alpha}
W_i]\}=\sum_{i=1}^\infty \Gamma_i^{-1/\alpha} \bar W_i+ \esp[W]
\sum_{i=1}^\infty \{\Gamma_i^{-1/\alpha} - \esp[\Gamma_i^{-1/\alpha}]\}\;,
$$ 
where $\bar{W}_i=W_i-\esp[W]$.
For $k\geq1$, define $\Sigma_k = \sum_{i=k}^\infty
\Gamma_i^{-1/\alpha}\bar W_i$. We proceed as in the proof of
\autoref{coro:burkholder}.  Conditioning on the Poisson process and applying
Burkholder's inequality, we have for any $t\in I$, since $\normi{W}=1$,
\begin{align}
  \label{eq:p-moment-pointwise}
\esp[|\Sigma_4(t)|^p] \leq C_p \sum_{i=4}^\infty \esp[\Gamma_i^{-p/\alpha}] <\infty \,.
\end{align}
Similarly, using the conditions~(\ref{eq:tightness-spectral-1})
and~(\ref{eq:tightness-spectral-2}), we have for some constants
$C$ and $C'$ only depending on $p$ and $\alpha$, for all $s<t<u$, 
\begin{align}
  \esp\big[\left|\Sigma_4(t)-\Sigma_4(s)\right|^p & \, \left|\Sigma_4(u)-\Sigma_4(t)\right|^p\big] \nonumber \\
  & \leq C\, \esp\left[\left(\sum_{i=4}^\infty \Gamma_i^{-2/\alpha}\right)^p\right] \, \esp[|\bar
  W(s,t)\bar W(t,u)|^p]  \nonumber \\
  \nonumber & \phantom{ = } + C\, \esp\left[\left(\sum_{i=4}^\infty
      \Gamma_i^{-p/\alpha}\right)^2\right] \esp[|\bar W(s,t)|^p] \, \esp[|\bar W(t,u)|^p] \\
 \label{eq:1} & \leq C' \{F(u)-F(s)\}^{2\gamma} \; .
\end{align}
In the first inequality we used the bounds
\begin{align*}
  \esp \Big[\Big| \sum_{i=4}^\infty \Gamma_i^{-2/\alpha} & \bar W_i(s,t) \bar W_i(t,u)\Big|^p\Big]  \\
  & \leq 2^p \sum_{i=4}^\infty \esp[\Gamma_i^{-2p/\alpha}]\esp\left[\left|\bar W(s,t)\bar
      W(t,u)-\esp[\bar W(s,t)\bar W(t,u)]\right|^p\right]  \\
  & +2^p\esp\left[\left(\sum_{i=4}^\infty \Gamma_i^{-2/\alpha}\right)^p\right] \left|\esp[\bar
    W_i(s,t) \bar W_i(t,u)]\right|^p  \\
  &\leq 2^{2p+2}\esp\left[\left(\sum_{i=4}^\infty \Gamma_i^{-2/\alpha}\right)^p\right]
  \esp\left[\left|\bar W(s,t)\bar W(t,u)\right|^p\right]\;,  \\
  \esp \Big[\Big|\sum_{i\neq j\geq4} \Gamma_i^{-1/\alpha} & \Gamma_j^{-1/\alpha} \bar W_i(s,t)\bar
  W_j(t,u)\Big|^p\Big]  \\
  & \leq \sum_{i\neq j\geq4} \esp\left[\Gamma_i^{-p/\alpha}\Gamma_j^{-p/\alpha}\right]\,\esp[|\bar
  W(s,t)|^p] \, \esp[|\bar W(t,u)|^p] \; .
\end{align*}
In the second inequality we used \autoref{lem:poissonthing}.

The bound~(\ref{eq:1}) and~(\ref{eq:p-moment-pointwise}) imply that
$\esp[\normi{\Sigma_4}^p]<\infty$, see \cite[Chapter~15]{billingsley:1968}.  Moreover, since $p/\alpha<2$ we have, for
$i=2,3$, $\esp[\Gamma_i^{-p/\alpha}]<\infty$.  Using $\normi{W_i}\leq1$ for
$i=2,3$ we finally get that $\esp[\normi{\Sigma_2}^p]<\infty$ and $Z$ can be
represented as
$$
\Gamma_1^{-1/\alpha} \bar W_1+ \Sigma_2+ \esp[W] \sum_{i=1}^\infty
\{\Gamma_i^{-1/\alpha} - \esp[\Gamma_i^{-1/\alpha}]\}
=\Gamma_1^{-1/\alpha} W_1 + T\;,
$$
where $T= \Sigma_2 +\esp[W] \sum_{i=2}^\infty \{\Gamma_i^{-1/\alpha} -
\esp[\Gamma_i^{-1/\alpha}]\}-\esp[\Gamma_1^{-1/\alpha} W_1]$ satisfies
$\esp[\normi{T}^p]<\infty$. Observe that $p>\alpha$.  Since
$\Gamma_1^{-1/\alpha}$ has a Frechet distribution with index $\alpha$, it holds
that $\Gamma_1^{-1/\alpha}W_1$ is regularly varying with spectral measure
$\nu$, which concludes the proof.
\end{proof}

In the next section, we prove some intermediate results needed to prove
Theorems~\ref{theo:conv-stable-D} and~\ref{theo:conv-stable-D-array}. In
particular, we give a condition for the convergence in $\di$ of the sequence of
expectations. This is not obvious, since the expectation fonctional is not
continuous in $\di$. We provide a criterion for the negligibility
condition~(\ref{eq:tightness<epsilon-array}) and for the sake of completeness,
we recall the main tools of random measure theory we need. In
section~\ref{sec:applis}, we give some applications of
Theorem~\ref{theo:conv-stable-D}.

\section{Some results on convergence in $\di$ and proof of the main results}

\subsection{Convergence of the expectation in $\di$}
\label{subsec:conv-expec}

It may happen that a uniformly bounded sequence $(X_n)$ converges weakly to $X$
in $(\di,J_1)$ but $\esp[X_n]$ does not converge to $\esp[X]$ in $(\di,J_1)$.
Therefore, to deal with the centering, we will need the following lemma.

\begin{lemma} 
  \label{lem:esp-cont}
  Suppose that $X_n$ converges weakly to $X$ in $(\di,J_1)$.  Suppose moreover
  that there exists $m>0$ such that $\sup_n \normi{X_n}\leq m$ a.s. and $X$ has no
  fixed jump, i.e. for all $t\in I$,
  \begin{align*} 
    \pr(t\in\disc(X)) = 0 \; .
  \end{align*}
  Then the maps $\esp[X_n]:t \to \esp[X_n(t)]$ and $\esp[X]:t \to \esp[X(t)]$
  are in $\di$, $\esp[X]$ is continuous on $I$ and $\esp[X_n]$ converges to
  $\esp[X]$ in $(\di,J_1)$.
\end{lemma}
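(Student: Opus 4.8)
The plan is to establish the three assertions in turn: I would derive the two structural facts (membership in $\di$ and continuity of $\esp[X]$) by dominated convergence, and then reduce the convergence statement, via the continuity of the limit, to ordinary uniform convergence, which I would obtain through a Skorohod coupling.

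First I would settle well-definedness and regularity. Since $\normi{\cdot}$ is invariant under the time changes entering the definition of $J_1$, it is $J_1$-continuous, so the closed-set portmanteau inequality applied to $\{\normi{\cdot}\le m\}$ yields $\normi{X}\le m$ a.s.; hence $\esp[X_n(t)]$ and $\esp[X(t)]$ are well defined and bounded by $m$ for every $t$. Right-continuity of $t\mapsto\esp[X_n(t)]$ and of $t\mapsto\esp[X(t)]$, together with the existence of left limits, then follow from dominated convergence applied to the right-continuity and the left limits of the paths of $X_n$ and $X$ (with dominating constant $m$); thus both maps lie in $\di$, and moreover $\esp[X](t-)=\esp[X(t-)]$. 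The jump of $\esp[X]$ at $t$ therefore equals $\esp[X(t)-X(t-)]$, which vanishes because the no-fixed-jump hypothesis gives $X(t)=X(t-)$ a.s.; hence $\esp[X]$ is continuous on $I$.

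It remains to prove $\esp[X_n]\to\esp[X]$ in $(\di,J_1)$. Since the limit $\esp[X]$ is continuous, $J_1$-convergence to it is equivalent to uniform convergence on every compact subinterval, so it suffices to treat a fixed compact $[a,b]\subset I$. Here I would invoke the Skorohod representation theorem (legitimate since $(\di,J_1)$ is Polish) to realize versions $\tilde X_n,\tilde X$ on one probability space with $\tilde X_n\to\tilde X$ a.s. in $J_1$, the same one-dimensional marginals (so that $\esp[\tilde X_n(t)]=\esp[X_n(t)]$), and $\normi{\tilde X_n}\le m$ a.s. The decisive ingredient is the elementary fact that if $y_n\to y$ in $J_1$ and $t_n\to t_\ast$ with $t_\ast\notin\disc(y)$, then $y_n(t_n)\to y(t_\ast)$: writing the $J_1$ time changes $\lambda_n\to\mathrm{id}$ uniformly, one sets $s_n=\lambda_n^{-1}(t_n)\to t_\ast$ and combines $|y_n(t_n)-y(s_n)|\le\sup_s|y_n(\lambda_n(s))-y(s)|\to0$ with $y(s_n)\to y(t_\ast)$, the latter by continuity of $y$ at $t_\ast$.

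Applying this pathwise with $y=\tilde X$ (continuous at $t_\ast$ a.s. by the no-fixed-jump property) gives $\tilde X_n(t_n)\to\tilde X(t_\ast)$ a.s. for any deterministic $t_n\to t_\ast$, and the uniform bound $m$ lets me pass to expectations by dominated convergence, so $\esp[X_n(t_n)]\to\esp[X](t_\ast)$. I would then conclude by contradiction: were uniform convergence on $[a,b]$ to fail, there would be $\epsilon>0$, a subsequence, and points $t_{n_k}\to t_\ast$ with $|\esp[X_{n_k}(t_{n_k})]-\esp[X](t_{n_k})|\ge\epsilon$; but $\esp[X](t_{n_k})\to\esp[X](t_\ast)$ by continuity and $\esp[X_{n_k}(t_{n_k})]\to\esp[X](t_\ast)$ by the previous step, a contradiction. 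The main obstacle is exactly this upgrade from pointwise to uniform convergence: for each fixed $n$ the expectation $\esp[X_n]$ may carry genuine jumps (only $X$, not the individual $X_n$, is assumed to have no fixed jumps), so no direct modulus-of-continuity comparison between $\esp[X_n]$ and the pathwise oscillation of $X_n$ is available; it is the coupling together with the evaluation lemma at moving times $t_n$ that forces the jumps of $\esp[X_n]$ to flatten out and delivers uniform convergence.
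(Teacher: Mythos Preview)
Your proof is correct and takes a genuinely different route from the paper's. Both arguments begin with the Skorokhod coupling and reduce to uniform convergence (since the limit $\esp[X]$ is continuous), but the paper instead writes $\esp[X_n]-\esp[X]$ as $\esp[X_n-X\circ\lambda_n]+\esp[X\circ\lambda_n-X]$, where the $\lambda_n$ are the random time changes from the $J_1$ definition; the first piece goes to zero by bounded convergence, and the second is handled by a finite-covering argument using that $\esp[\osc(X;B(t,r))]\to0$ as $r\to0$ for each fixed $t$ (again by the no-fixed-jump hypothesis and bounded convergence). You replace this covering step by the evaluation lemma ``$y_n\to y$ in $J_1$, $t_n\to t_\ast\notin\disc(y)$ implies $y_n(t_n)\to y(t_\ast)$'' applied pathwise, and then a sequential-compactness contradiction on the maximizing points. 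Your route is a bit more direct and avoids the explicit $\epsilon$-net; the paper's version is essentially constructive and makes the role of the oscillation of $X$ transparent, but for the purposes of this lemma there is no substantive difference. (A cosmetic remark: since $I$ is already compact in this paper, your reduction to a subinterval $[a,b]\subset I$ is unnecessary, though harmless.)
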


\begin{proof}  
  Since we have assumed that $\sup_{n\geq0} \normi{X_n} \leq m$, almost surely,
  it also holds that $\normi{X}\leq m$ almost surely. The fact that $\esp[X_n]$
  and $\esp[X]$ are in $\di$ follows by bounded convergence. Because $X$ has no
  fixed jump, we also get that $\esp[X]$ is continuous on $I$.

  By Skorokhod's representation theorem, we can assume that $X_n$ converges to
  $X$ almost surely in $\di$.  By the definition of Skorokhod's metric (see
  e.g. \cite{billingsley:1968}), there exists a sequence $(\lambda_n)$ of random
  continuous strictly increasing functions mapping $I$ onto itself such that
  $\normi{\lambda_n-\mathrm{id}_I}$ and $\normi{X_n- X\circ \lambda_n}$
  converge almost surely to~0. By bounded convergence, it also holds that $ \lim_{n
    \to\infty} \esp[\normi{X_n - X \circ\lambda_n}] = 0$.  Write now
\begin{align*}
  \normi{\esp[X_n]-\esp[X]} & \leq \normi{\esp[X_n -
    X\circ\lambda_n]} + \normi{\esp[X\circ\lambda_n- X]} \; .
\end{align*}
The first term on the right-hand side converges to zero so we only consider the
second one.  Denote the oscillation of a function $x$ on a set $A$ by
\begin{align}   \label{eq:def-osc}
\osc(x;A)=\sup_{t\in A} x(t)-\inf_{t\in A} x(t) \; .
\end{align}
Let the open ball centered at $t$ with radius $r$ be denoted by $B(t,r)$.
Since $X$ is continuous at $t$ with probability one, it holds that
$\lim_{r\to0} \osc(X;B(r,t)) = 0$, almost surely. Since $\normi{X}\leq m$
almost surely, by dominated convergence, for each $t\in I$, we have
$$
\lim_{r\to0} \esp[\osc(X;B(t,r))] = 0 \;.
$$
Let $\eta>0$ arbitrary. For each $t \in I$ there exists $r(t,\eta)\in(0,\eta)>0$ such
that
$$
\esp[\osc(X;B(t,r(t,\eta)))]\leq \eta \;.
$$
Since $I$ is compact, it admits a finite covering by balls $B(t_i,\epsilon_i)$,
$i=1,\dots,p$ with $\epsilon_i =r(t_i,\eta)/2$. Fix some $\zeta\in(0,\min_{1
  \leq i \leq p} \epsilon_i)$. Then, for $s \in B(t_i,\epsilon_i)$ and by
choice of $\zeta$, we have
\begin{align*}
  | \esp[X\circ\lambda_n(s)]-\esp[X(s)] | & \leq \esp [|X\circ\lambda_n(s)-X(s)|
  \1{\{\normi{\lambda_n-\mathrm{id}_I}\leq \zeta\}}] + 2m \pr(\normi{\lambda_n-\mathrm{id}_I} > \zeta)  \\
  &  \leq \esp[ \osc(X;B(t_i,r(t_i,\eta)) ] + 2m \pr(\normi{\lambda_n-\mathrm{id}_I} > \zeta)   \\
  & \leq \eta + 2m \pr(\normi{\lambda_n-\mathrm{id}_I} > \zeta) \; .
\end{align*}
The last term does not depend on $s$, thus
$$
\normi{\esp[X\circ\lambda_n]-\esp[X]} \leq \eta + 2m
\pr(\normi{\lambda_n-\mathrm{id}_I} > \zeta) \; . 
$$ 
Since $\normi{\lambda_n-\mathrm{id}_I}$ converges almost surely to zero, we
obtain that
\begin{align*}
  \limsup_{n\to\infty} \normi{ \esp[X\circ\lambda_n]-\esp[X] } \leq \eta  \; .
\end{align*}
Since $\eta$ is arbitrary, this concludes the proof. 
\end{proof}
\begin{remark}
  Observe that, since $\esp[X]$ is continuous, the convergence
  $\esp[X_n]\to\esp[X]$ in the $J_1$ topology implies the uniform convergence so
  it is not surprising that we did obtain uniform convergence in the proof.
\end{remark}
\begin{remark}
Under the stronger assumption that $X_n$ converges uniformly to
  $X$, \autoref{lem:esp-cont} trivially holds since
  \begin{align*}
    \normi{\esp[X_n-X]} \leq \esp \left[
      \normi{X_n-X} \right] \;,
  \end{align*}
  and the result then follows from dominated convergence. If $X$ is
  a.s. continuous the uniform convergence follows from the convergence in the
  $J_1$ topology. If $X_n$ is a sum of independent variables converging weakly
  in the $J_1$ topology to $X$ with no pure jumps then the convergence in the
  $J_1$ topology again implies the uniform convergence. See
  \citet[Corollary~2.2]{basse:rosinski:2012}. However, under our assumptions,
  the uniform convergence does not always hold, see \autoref{expl:Levy} below. 
\end{remark}

\begin{remark}
  We can rephrase \autoref{lem:esp-cont}. Let $m>0$. Consider the closed
  subset of $\di$
  \begin{align*}
    \mathcal{B}_I(m)=\{x\in\mathcal{D}(I)\,,\,\normi{x}\leq m\} \; ,
  \end{align*}
  i.e. the closed ball centered at zero with radius $m$ in the uniform metric.
  Let $\mathcal{M}_0$ be the set of probability measures $\xi$ on
  $\mathcal{B}_I(m)$ such that, for all $t\in I$, $x$ is continuous at $t$
  $\xi$-a.s., i.e. 
  \begin{align*}
    \forall t \in I \; , \ \ \xi(\{x\in\mathcal{B}_I(m) \, , \, t\in\disc(x)\}) = 0 \; .
  \end{align*}
  Then \autoref{lem:esp-cont} means that the map $\xi\mapsto \int x\,\xi(\rmd
  x)$ defined on the set of probability measures on $\mathcal{B}_I(m)$ endowed
  with the topology of weak convergence takes its values in $\di$ (endowed with
  the $J_1$ topology), and is continuous on $\mathcal{M}_0$. In other words, if
  $\{\xi_n\}$ is a sequence of probability measures on $(\di,J_1)$ which
  converges weakly to $\xi$, such that $\xi_n(\mathcal{B}_I(m))=1$ and $\xi \in
  \mathcal{M}_0$, then $\int x\,\xi_n(\rmd x)$ converges to $\int x\,\xi(\rmd x)$
  in $(\di,J_1)$.

  The continuity of the map $\xi\mapsto\int x\,\xi(\rmd x)$ is not true out of
  $\mathcal{M}_0$, see \autoref{exple:J1-conter-exple} and
  \autoref{exple:M1-conter-exple} below.
\end{remark}

\begin{example}\label{expl:Levy}
  For $I=[0,1]$, set $X_n=\1{[U(n-1)/n,1]}$ and $X=\1{[U,1]}$ with $U$
  uniform on $[0,1]$. Then the assumptions of \autoref{lem:esp-cont}
  hold. However, $X_n$ converges a.s. to $X$ in the $J_1$ topology but not
  uniformly.
\end{example}

Let us now provide counter examples in the case where the assumption of
\autoref{lem:esp-cont} on the limit $X$ is not satisfied.
\begin{example}\label{exple:J1-conter-exple}
Let $I=[0,1]$, $X=\1{[1/2,1]}$
and $X_n=\1{[U_n,1]}$ where $U_n$ is drawn uniformly on $[1/2-1/n,1/2]$. Then
$X_n\to X$ a.s. in $\di$ but $\esp[X_n]$ does not converge to $\esp[X]=X$ in the
$J_1$-topology, though it does converge in the $M_1$-topology.
\end{example}
\begin{example}\label{exple:M1-conter-exple}
Set
  $X_n=\1{[u_n,1]}$ for all $n$ with probability $1/2$ and $X_n=-\1{[v_n,1]}$
  for all $n$ with probability $1/2$, where $u_n=1/2-1/n$ and
  $v_n=1/2-1/(2n)$. In the first case $X_n\to\1{[1/2,1]}$ in $\di$ with
  $I=[0,1]$ and in the second case, $X_n\to-\1{[1/2,1]}$ in $\di$. Hence
  $X_n\to X$ a.s. in $\di$ for $X$ well chosen. On the other hand, we have
  $\esp[X_n]=\1{[u_n,v_n)}$ which converges uniformly to the null function on
  $[0,u]\cup[1/2,1]$ for all $u\in(0,1/2)$, but whose sup on $I=[0,1]$ does not
  converge to 0; hence $\esp[X_n]$ cannot converge in $\di$ endowed with $J_1$,
  nor with the other usual distances on $\di$ such as the $M_1$
  distance.
\end{example}
The assumption that $\sup_n\normi{X_n}\leq m$ a.s. can be
replaced by a uniform integrability assumption. Using a truncation argument,
the following corollary is easily proved. The extension of the univariate case
to the multivariate one is obvious in the product topology so we state the
result in a multivariate setting.
\begin{corollary}
  \label{cor:esp-cont}
  Suppose that $X_n$ converges weakly to $X$ in $(\dil,\jl)$.  Suppose moreover
  that $X$ has no fixed jump and $\{\normil{X_n},\,n\geq1\}$ is uniformly
  integrable, that is,
$$
\lim_{M\to\infty}\limsup_{n\to\infty}\esp\left[\normil{X_n}
\1{\{\normil{X_n}>M\}}\right] = 0\;.
$$ 
Then the maps $\esp[X_n]:t \to \esp[X_n(t)]$ and $\esp[X]:t \to \esp[X(t)]$ are
in $\dil$, $\esp[X]$ is continuous on $I$ and $\esp[X_n]$ converges to
$\esp[X]$ in $(\dil,\jl)$.
\end{corollary}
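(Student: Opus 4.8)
The plan is to reduce to \autoref{lem:esp-cont} (in its obvious multivariate form) by a soft radial truncation, and to control the truncation error through the uniform integrability hypothesis. First I would fix a continuous cutoff $\psi_M:[0,\infty)\to[0,1]$ equal to $1$ on $[0,M]$, to $0$ on $[2M,\infty)$ and linear in between, and define the truncation map $h_M:\dil\to\dil$ by $h_M(x)=\psi_M(\normil{x})\,x$. Since the sup-norm $\normil{\cdot}$ is continuous on $(\dil,\jl)$ and scalar multiplication $(c,x)\mapsto cx$ is jointly continuous in the product $J_1$ topology, $h_M$ is continuous on $(\dil,\jl)$; moreover $\normil{h_M(x)}=\normil{x}\,\psi_M(\normil{x})\leq 2M$, and $\disc(h_M(x))\subseteq\disc(x)$ because $\psi_M(\normil{x})$ is a scalar determined by the whole path. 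In particular $h_M(X)$ has no fixed jump whenever $X$ does.

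Next I would apply the continuous mapping theorem to get $h_M(X_n)\to h_M(X)$ weakly in $(\dil,\jl)$. Since $\sup_n\normil{h_M(X_n)}\leq 2M$ a.s. and $h_M(X)$ has no fixed jump, the multivariate version of \autoref{lem:esp-cont} gives that $\esp[h_M(X_n)]\to\esp[h_M(X)]$ in $(\dil,\jl)$ and that $\esp[h_M(X)]$ is continuous. Because the limit is continuous, this $J_1$ convergence is in fact uniform, i.e. $\normil{\esp[h_M(X_n)]-\esp[h_M(X)]}\to0$ for each fixed $M$.

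Then I would estimate the truncation error. Since $x-h_M(x)=(1-\psi_M(\normil{x}))\,x$ vanishes on $\{\normil{x}\leq M\}$ and has norm at most $\normil{x}$ elsewhere, we get $\normil{\esp[X_n]-\esp[h_M(X_n)]}\leq\esp[\normil{X_n}\1{\{\normil{X_n}>M\}}]$, and likewise for $X$. The hypothesis makes the $\limsup_n$ of the first bound tend to $0$ as $M\to\infty$; the integrability of $\normil{X}$ and the vanishing of $\esp[\normil{X}\1{\{\normil{X}>M\}}]$ follow from uniform integrability together with $\normil{X_n}\to\normil{X}$ in distribution (the sup-norm being $J_1$-continuous). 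A triangle inequality, taking first $\limsup_n$ and then $M\to\infty$, yields $\normil{\esp[X_n]-\esp[X]}\to0$, i.e. uniform and a fortiori $J_1$ convergence; continuity of $\esp[X]$ follows since it is the uniform limit of the continuous functions $\esp[h_M(X)]$. Well-definedness and the càdlàg property of $\esp[X_n]$ and $\esp[X]$ follow from dominated convergence using the integrable bound $\normil{\cdot}$.

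The main obstacle is the choice of truncation: a hard truncation $X_n\1{\{\normil{X_n}\leq M\}}$ is not $J_1$-continuous in $X_n$ (the indicator jumps across the sphere $\{\normil{\cdot}=M\}$) and could create new discontinuity points, so the continuous mapping step would require restricting to the countably-many-exceptions set $\{M:\pr(\normil{X}=M)=0\}$. The soft cutoff $\psi_M$ sidesteps this: it is everywhere $J_1$-continuous, preserves the absence of fixed jumps, and is still dominated by $\normil{\cdot}\1{\{\normil{\cdot}>M\}}$ so that uniform integrability controls the error. The only other point to watch is that all three pieces must be compared in the uniform norm, which is legitimate precisely because each limit involved is continuous.
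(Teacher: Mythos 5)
Your proof is correct and follows exactly the route the paper intends: it proves the corollary ``using a truncation argument'' reducing to \autoref{lem:esp-cont}, with the uniform integrability hypothesis absorbing the truncation error. Your use of a continuous cutoff $\psi_M$ (rather than a hard indicator truncation) is a sound way to make the continuous-mapping step work without restricting to continuity points of the law of $\normil{X}$, and all the estimates check out.
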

\subsection{Weak convergence of random measures}
\label{sec:weak-conv-rand-meas}
Let $\csms$ be a complete separable metric space (CSMS).  Let $\mplusx$
denote the set of boundedly finite nonnegative Borel measures $\mu$ on
$\csms$, i.e. such that $\mu(A)<\infty$ for all bounded Borel sets $A$. A
sequence $(\mu_n)$ of elements of $\mplusx$ is said to converge weakly to
$\mu$, noted by $\mu_n\weakdiese\mu$, if $ \lim_{n\to\infty}\mu_n(f)=\mu(f) $
for all continuous functions $f$ with bounded support in $\csms$.  The
weak convergence in $\mplusx$ is metrizable in such a way that $\mplusx$ is a
CSMS, see~\citet[Theorem~A2.6.III]{daley:vere-jones:bookvolI}.  We denote by
$\mathcal B(\mplusx)$ the corresponding Borel sigma-field.

Let $(M_n)$ be a sequence of random elements of $(\mplusx,\mathcal
B(\mplusx))$.  Then, by~\citet[Theorem 11.1.VII]{daley:vere-jones:bookvolII},
$M_n$ converges weakly to $M$, noted $M_n\Rightarrow M$, if and only if
$$
(M_n(A_1),\dots,M_n(A_k)) \Rightarrow (M(A_1),\dots,M(A_k))\quad\text{in
  $\mathbb R^k$}
$$
for all $k=1,2,\dots$, and all bounded sets $A_1,\dots,A_k$ in $\mathcal
B(\mplusx)$ such that
$M(\partial A_i)=0$ a.s. for all $i=1,\dots,k$. As stated
in~\citet[Proposition~11.1.VIII]{daley:vere-jones:bookvolII}, this is equivalent
to
the pointwise convergence of the  Laplace functional of $M_n$ to that of $M$,
that is, 
\begin{equation}
  \label{eq:laplace-general}
  \lim_{n\to\infty} \esp[\rme^{-M_n(f)}]=\esp[\rme^{-M(f)}]\;,
\end{equation}
for all bounded continuous function $f$ with bounded support.

A point measure in $\mplusx$ is a measure which takes integer values on the
bounded Borel sets of $\csms$. A point process in $\mplusx$ is a random point
measure in $\mplusx$. In particular a Poisson point process has an intensity
measure in $\mplusx$.  In the following, we shall denote by $\mplusxp$ the set
of point measures in $\mplusx$ and by  $\mplusxf$ the set
of finite measures in $\mplusx$.

Consider now the space $\di$ endowed with the $J_1$ topology.  Let $\delta$ be
a bounded metric generating the $J_1$ topology on $\di$ and which makes it a CSMS,
see~\citet[Section~14]{billingsley:1968}.  From now on we denote by
$\csmsdi=(\di,\delta\wedge1)$ this CSMS, all the Borel sets of which
are bounded, since we chose $\delta$ bounded.  We further let $\mplusxpdisc$ be the subset of point measures $m$
such that, for all distinct $x$ and $y$ in $\di$ such that
$\disc(x)\cap\disc(y)\neq\emptyset$, $m(\{x,y\})<2$. In other words, $m$ is
simple (the measure of all singletons is at most 1) and the elements of the
(finite) support of $m$ have disjoint sets of discontinuity points.

\begin{lemma}\label{lem:csmsdi}
  Let $\phi:\mplusxpdi\to\di$ be defined by
$$
\phi(m) = \int w\;m(\rmd w) \;.
$$
Let $\mplusxpdi$ be endowed with the $w^\#$ topology and $\di$ with the $J_1$
topology. Then $\phi$ is continuous on $\mplusxpdisc$.
\end{lemma}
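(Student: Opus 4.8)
The plan is to use that both $\mplusxpdi$ equipped with the $w^\#$ topology and $\di$ equipped with $J_1$ are metrizable, so it suffices to establish sequential continuity at each point of $\mplusxpdisc$: given $m_n\weakdiese m$ with $m\in\mplusxpdisc$ (the $m_n$ being arbitrary elements of $\mplusxpdi$), I must show $\phi(m_n)\to\phi(m)$ in $J_1$. Since the metric $\delta\wedge1$ is bounded, every Borel subset of $\csmsdi$ is bounded and hence all the measures in play are finite; in particular, testing $w^\#$ convergence against the continuous, bounded-support function $f\equiv1$ gives $m_n(\di)\to m(\di)$, and as these are integers the total masses are eventually equal to $k:=m(\di)<\infty$. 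Writing $m=\sum_{j=1}^k\pointmass_{w_j}$, simplicity of $m$ forces the $w_j$ to be pairwise distinct, and the defining property of $\mplusxpdisc$ gives that the sets $\disc(w_j)$ are pairwise disjoint.

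First I would localize the atoms. I pick $r>0$ smaller than half the minimal pairwise $\delta$-distance of the $w_j$, so the open balls $B(w_j,r)$ are disjoint, and small enough that $m(\partial B(w_j,r))=0$ for every $j$ (only finitely many radii are excluded, since $m$ has finitely many atoms). The $w^\#$ characterization of convergence on bounded continuity sets then yields $m_n(B(w_j,r))\to m(B(w_j,r))=1$, so for $n$ large each ball contains exactly one atom of $m_n$, say $w_{n,j}$. Together with $m_n(\di)=k$ eventually, this forces $m_n=\sum_{j=1}^k\pointmass_{w_{n,j}}$ for large $n$, with no atom lying outside $\bigcup_jB(w_j,r)$. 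Letting $r\downarrow0$ along good radii and using that the localization is nested and eventually unique in each ball, I obtain $w_{n,j}\to w_j$ in $\csmsdi$, that is in $J_1$, for each $j=1,\dots,k$.

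It then remains to pass from coordinatewise convergence $w_{n,j}\to w_j$ to convergence of the sums $\phi(m_n)=\sum_{j=1}^k w_{n,j}\to\sum_{j=1}^k w_j=\phi(m)$. Here the disjoint-discontinuity hypothesis is indispensable: addition is not $J_1$-continuous in general, but it is continuous at a pair $(x,y)$ with $\disc(x)\cap\disc(y)=\emptyset$ by \citet[Theorem~4.1]{whitt:1980} (cf. \autoref{rem:iid-no-common jumps}). I would induct on the partial sums $S_{n,j}=w_{n,1}+\cdots+w_{n,j}$ and $S_j=w_1+\cdots+w_j$: assuming $S_{n,j}\to S_j$, I add $w_{n,j+1}\to w_{j+1}$ and apply Whitt's theorem at $(S_j,w_{j+1})$, which is legitimate because $\disc(S_j)\subseteq\bigcup_{i\le j}\disc(w_i)$ is disjoint from $\disc(w_{j+1})$. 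After $k$ steps this gives $\phi(m_n)=S_{n,k}\to S_k=\phi(m)$.

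The main obstacle is exactly this last step. The failure of joint $J_1$-continuity of addition is what makes the statement false without the restriction to $\mplusxpdisc$, and the induction requires checking at each stage that the accumulated discontinuities of the partial sum remain disjoint from those of the next summand — which is guaranteed precisely by the pairwise disjointness of the $\disc(w_j)$. By contrast, the atom-localization step is routine once one records that the bounded metric renders every measure finite, so the real content of the proof is the organized, iterated use of Whitt's continuity-at-a-point theorem.
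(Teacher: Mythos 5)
Your proof is correct and follows essentially the same route as the paper's: localize the atoms of $m_n$ in small disjoint balls around the atoms of $m$ using $w^{\#}$ convergence on continuity sets and the bounded metric (so total masses converge), deduce atomwise $J_1$ convergence, and then sum via Whitt's Theorem~4.1 on continuity of addition at pairs with disjoint discontinuity sets. The only cosmetic difference is that you spell out the induction on partial sums explicitly, whereas the paper invokes Whitt's result directly for the $p$-fold sum.
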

This result follows from \citet[Theorem~4.1]{whitt:1980}, which establishes the
continuity of the summation on the subset of all $(x,y)\in\di\times\di$
(endowed with the product $J_1$ topology) such that
$\disc(x)\cap\disc(y)=\emptyset$. However it requires some adaptation to the
setting of finite point measures endowed with the $w^\#$ topology. We
provide a detailed proof for the sake of completeness.
\begin{proof}
  Let $(\mu_n)$ be a sequence in $\mplusxpdi$ and $\mu\in\mplusxpdisc$ such that
  $\mu_n\weakdiese\mu$. We write $\mu=\sum_{k=1}^p\pointmass_{y_k}$, where
  $\pointmass_x$ denotes the unit point mass measure at $x$ and $p=\mu(\di)$.
  Since $\mu$ is simple, we may find $r>0$ such that, $\mu(B(y_k,r))=1$ for all
  $k=1,\dots,p$, where $B(x,r)=\{y\in\di\,:\,\delta(x,y)<r\}$. Let $g_k$ be the
  mapping defined on $\mplusxpdi$ with values in $\di$ defined by
$$
g_k(m)=
\begin{cases}
  0 &\text{ if $m(B(y_k,r/2))\neq1$,}\\
  x &\text{ otherwise}\;,
\end{cases}
$$
where, in the second case, $x$ is the unique point in the intersection of
$B(y_k,r/2)$ with the support of $m$. 

Now, for any $r'\in(0,r)$ and all  $k=1,\dots,p$, since $\partial B(y_k,r')\subset
B(y_k,r)\setminus\{y_k\}$, we have $\mu(\partial B(y_k,r))=0$ and thus
\begin{equation}
  \label{eq:inside-all-small-balls}
\lim_{n\to\infty}\mu_n(B(y_k,r'))=\mu(B(y_k,r'))=1\;.
\end{equation}
On the other hand, since $\di$ is endowed with a bounded metric, the definition
of the $w^\#$ convergence implies that
$$
\lim_{n\to\infty}\mu_n(\di)=\mu(\di)=p\;.
$$
It follows that, for $n$ large enough,
$$
\phi(\mu_n)=\sum_{k=1}^p g_k(\mu_n) \;.
$$
By \citet[Theorem~4.1]{whitt:1980}, to conclude, it only remains to show that
each term of this sum converges to its expected limit, that is, for all
$k=1,\dots,p$, $g_k(\mu_n)\to y_k$ in $\csmsdi$ as $n\to\infty$.

Using~(\ref{eq:inside-all-small-balls}), we
deduce that, for all $r'\in(0,r/2)$,
$$
\limsup_{n\to\infty}\delta(g_k(\mu_n),y_k)\leq r'\;,
$$
which shows the continuity of $g_k$ at $\mu$ and achieves the proof.  
\end{proof}

To deal with multivariate functions, we endow $\csmsdi^\ell$ with the metric 
$$
\delta_\ell((x_1,\dots,x_\ell),(x'_1,\dots,x'_\ell))= \sum_{i=1}^\ell\delta(x_i,x'_i)\;,
$$
so that the corresponding topology is the product topology denoted by $\jl$. We immediately get
the following result. 
\begin{corollary}
\label{cor:csmsdil}
  Let $\Phi:\mplusxpdil\to\dil$ be defined by
$$
\Phi(m) = \int w\;m(\rmd w) \;.
$$
Let $\mplusxpdil$ be endowed with the $w^\#$ topology and $\dil$ with the $\jl$
topology. Then $\Phi$ is continuous on $\mplusxpdiscl$.
\end{corollary}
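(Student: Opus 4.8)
The plan is to follow closely the proof of \autoref{lem:csmsdi}, replacing the univariate space $\di$ by $\dil$, the bounded metric $\delta$ by $\delta_\ell$, and \citet[Theorem~4.1]{whitt:1980} by its product analogue. Concretely, let $(\mu_n)$ be a sequence in $\mplusxpdil$ and $\mu\in\mplusxpdiscl$ with $\mu_n\weakdiese\mu$, and write $\mu=\sum_{k=1}^p\pointmass_{w^{(k)}}$ with $w^{(k)}=(w^{(k)}_1,\dots,w^{(k)}_\ell)$ and $p=\mu(\dil)$. Since $\mu$ is simple, I would pick $r>0$ so that $\mu(B_\ell(w^{(k)},r))=1$ for every $k$, where $B_\ell$ is the open ball for $\delta_\ell$, and define selector maps $g_k$ exactly as in \autoref{lem:csmsdi}. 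The same small-ball and total-mass arguments then yield $\Phi(\mu_n)=\sum_{k=1}^p g_k(\mu_n)$ for $n$ large, together with $g_k(\mu_n)\to w^{(k)}$ in $(\dil,\jl)$.

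The one genuinely new ingredient is the continuity of the finite summation map $(x^{(1)},\dots,x^{(p)})\mapsto\sum_k x^{(k)}$ on $\dil$ at tuples whose terms have pairwise disjoint discontinuity sets, which takes the place of \citet[Theorem~4.1]{whitt:1980} in the univariate argument. I would deduce it coordinatewise: because $\jl$ is the product topology, convergence in $\dil$ is equivalent to convergence of each of the $\ell$ coordinates in $(\di,J_1)$, and $\sum_k x^{(k)}$ converges coordinate by coordinate as soon as, for each fixed $i$, the univariate functions $x^{(1)}_i,\dots,x^{(p)}_i$ have pairwise disjoint discontinuity sets. The hypothesis $\mu\in\mplusxpdiscl$ supplies exactly this: since $\disc(w^{(k)})=\bigcup_{i}\disc(w^{(k)}_i)$ and the sets $\disc(w^{(k)})$ are pairwise disjoint, we get $\disc(w^{(k)}_i)\cap\disc(w^{(k')}_i)=\emptyset$ for all $i$ and all $k\neq k'$, so the univariate result applies in every coordinate and the summation continuity transfers to $\dil$.

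I expect the only delicate point to be bookkeeping rather than substance. In particular I would resist the tempting shortcut of pushing $\mu$ forward under each coordinate projection $\pi_i:\dil\to\di$ and invoking \autoref{lem:csmsdi} directly on the image measure $\pi_{i*}\mu$: although $\mu_n\weakdiese\mu$ does transfer to $\pi_{i*}\mu_n\weakdiese\pi_{i*}\mu$ (because $g\circ\pi_i$ is bounded and continuous whenever $g$ is), the image $\pi_{i*}\mu$ need not be simple, since two distinct atoms $w^{(k)}\neq w^{(k')}$ of $\mu$ may share their $i$-th coordinate. Such a shared coordinate is necessarily continuous, as its discontinuity set lies in $\disc(w^{(k)})\cap\disc(w^{(k')})=\emptyset$, so it does no harm to the summation; nevertheless $\pi_{i*}\mu$ falls outside $\mplusxpdisc$, and \autoref{lem:csmsdi} cannot be quoted as a black box. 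Working directly in $\dil$, where the atoms of $\mu$ stay distinct with pairwise disjoint discontinuities, avoids this issue altogether, which is why the result is, as claimed, immediate.
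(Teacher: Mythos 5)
Your proof is correct, and it takes a genuinely different route from the paper's. The paper argues componentwise at the level of the \emph{measure}: it writes $\Phi=(\Phi_1,\dots,\Phi_\ell)$, pushes $m$ forward under the $i$-th coordinate projection to get $m_i\in\mplusxpdi$, notes $\Phi_i(m)=\phi(m_i)$, asserts that $m\in\mplusxpdiscl$ implies $m_i\in\mplusxpdisc$, and invokes \autoref{lem:csmsdi} as a black box. You instead keep the selector/small-ball argument in the product space $\dil$ and only descend to coordinates at the very last step, where the continuity of $p$-fold addition is needed and Whitt's univariate theorem applies in each coordinate because $\disc(w^{(k)}_i)\subseteq\disc(w^{(k)})$. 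What is notable is that the ``tempting shortcut'' you explicitly reject is precisely the paper's proof, and your objection to it is well founded: if $\mu=\pointmass_{(f,g)}+\pointmass_{(f,h)}$ with $f$ continuous and $g,h$ having disjoint jump sets, then $\mu\in\mplusxpdiscl$ but $\mu_1=2\pointmass_{f}$ is not simple, so $\mu_1$ lies outside $\mplusxpdisc$ as that set is used in the proof of \autoref{lem:csmsdi} (whose very first step, choosing $r$ with $\mu(B(y_k,r))=1$, relies on simplicity). As you observe, the offending shared coordinate is necessarily continuous, so the paper's conclusion is not in danger and its argument could be patched (e.g.\ by extending \autoref{lem:csmsdi} to point measures whose multiple atoms are continuous functions), but the reduction as literally written has a gap that your direct product-space argument avoids. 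Your version is therefore the more careful of the two; its only cost is having to restate, rather than cite, the ball-and-selector bookkeeping of \autoref{lem:csmsdi}.
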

\begin{proof}
  Let us write $\Phi=(\Phi_1,\dots,\Phi_\ell)$ with each
  $\Phi_i:\mplusxpdil\to\di$.  Since $\jl$ is the product topology, it amounts
  to show that each component $\Phi_i$ is continuous on $\mplusxpdil$. We shall
  do it for $i=1$.  Observe that the mapping $m\mapsto m_1$ defined from
  $\mplusxpdil$ to $\mplusxpdi$, by $m_1(A)=m(A\times\di^{\ell-1})$ for all
  Borel set $A$ in $\csmsdi$ is continuous for the $\weakdiese$
  topology. Moreover we have $\Phi_1(m)=\phi(m_1)$ and $m\in\mplusxpdiscl$ implies
  $m_1\in\mplusxpdisc$. Hence, by
  \autoref{lem:csmsdi}, $\Phi_1$ is continuous on $\mplusxpdiscl$, which
  concludes the proof.
\end{proof}

We now consider the space $(0,\infty]\times \sil$, which, endowed with the metric
$$
d((r,x),(r',x'))= |1/r-1/r'|+ \delta_\ell(x,x')\;,
$$
is also a CSMS. For convenience, we shall denote the corresponding metric space
by 
$$
\csmssil=\left((0,\infty]\times \sil,d\right)\;. 
$$
In this case the necessary and sufficient condition~(\ref{eq:laplace-general})
must be checked for all bounded continuous function $f$ defined on $\csmssil$
and vanishing on $(0,\eta)\times \sil$ for some $\eta>0$.

The following consequence of \autoref{cor:csmsdil} will be useful.

\begin{corollary}
\label{theo:summation}
Let $\mu \in \mplusxsil$ and $\epsilon>0$ be such that
\begin{enumerate}[label={\rm (D-\roman*)}]
\item\label{item:disc-set-summation}   for all $t\in I$, 
$\mu(\{(y,x)\in\csmssil\;,\; t\in\disc(x)\})=0$,
\item\label{item:epsilon-cond-summation}  $\mu(\{\epsilon,\infty\}\times\sil)=0$.
\end{enumerate}
Let $M$ be a Poisson point process on $\csmssil$ with control measure $\mu$. Let
$\{M_n\}$ be a sequence of point processes in $\mplusxsil$ which converges
weakly to $M$ in $\mplusxsil$. Then, the weak convergence
\begin{align}
  \label{eq:mapping-summation}
  \int_{(\epsilon,\infty)} \int_{\sil} y w \; M_n(\mathrm{d}y,\mathrm{d}w) \Rightarrow
  \int_{(\epsilon,\infty)} \int_{\sil} y w \; M(\mathrm{d}y,\mathrm{d}w) 
\end{align}
holds in $(\dil,\jl)$ and the limit has no pure jump.
\end{corollary}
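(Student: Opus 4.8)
The plan is to write the map $N\mapsto\int_{(\epsilon,\infty)}\int_{\sil} yw\,N(\rmd y,\rmd w)$ as a composition of three operations on point measures and to invoke the continuous mapping theorem. First I restrict a point measure $N$ on $\csmssil$ to the bounded set $B=(\epsilon,\infty)\times\sil$ (bounded because $\delta_\ell$ and $\sil$ are bounded and $1/y<1/\epsilon$ on $B$), writing $R_\epsilon N=N|_{B}$. Next I push $R_\epsilon N$ forward under the map $\tau\colon(y,w)\mapsto yw$, where $yw\in\dil$ denotes the function $t\mapsto y\,w(t)$; this produces a point measure $\tau_*R_\epsilon N$ on $\csmsdi^\ell$. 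Finally I apply the summation functional $\Phi$ of \autoref{cor:csmsdil}, so that, with $h=\Phi\circ\tau_*\circ R_\epsilon$, one has $h(N)=\int_{(\epsilon,\infty)}\int_{\sil} yw\,N(\rmd y,\rmd w)$. Since $M_n\Rightarrow M$ in $\mplusxsil$, the weak convergence~(\ref{eq:mapping-summation}) will follow from the continuous mapping theorem once I check that $M$ lies almost surely in the set of continuity points of $h$.

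The restriction $R_\epsilon$ is continuous at any $N$ with $N(\partial B)=0$, and here $\partial B=\{\epsilon,\infty\}\times\sil$, so Condition~\ref{item:epsilon-cond-summation} gives $\mu(\partial B)=0$; as $M$ is Poisson with control measure $\mu$, $\esp[M(\partial B)]=\mu(\partial B)=0$ and thus $M(\partial B)=0$ almost surely. On $B$ the scalar $y$ is finite and positive, and multiplication of a càdlàg function by a positive scalar is jointly continuous for $\jl$, so $\tau$ is continuous on $B$; it is moreover injective there, because $\normil{w}=1$ yields $\normil{yw}=y$, whence $y$ and then $w=yw/y$ are recovered from $yw$. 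Consequently $\tau_*$ is $w^\#$-continuous on the restricted measures and identifies no two distinct points of $M$.

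The delicate step, which I expect to be the main obstacle, is to show that $\tau_*R_\epsilon M\in\mplusxpdiscl$ almost surely, for this is exactly the set on which $\Phi$ is continuous. Because $B$ is bounded, $\mu(B)<\infty$ and $M$ has finitely many points in $B$ almost surely, so the integral is a finite sum and moment formulas apply cleanly. By Condition~\ref{item:disc-set-summation} and the discussion in \autoref{rem:iid-no-common jumps}, the product measure $\mu\otimes\mu$ charges no pair $(w,w')$ with $\disc(w)\cap\disc(w')\neq\emptyset$. Since the second factorial moment measure of the Poisson process $M$ is $\mu\otimes\mu$, Campbell's formula shows that the expected number of ordered pairs of distinct points of $M$ in $B$ whose spectral parts have overlapping discontinuities equals the $\mu\otimes\mu$-measure of that null set, hence is zero; so almost surely the functions attached to distinct points of $M$ in $B$ have disjoint discontinuity sets. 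Together with the injectivity of $\tau$ this places $\tau_*R_\epsilon M$ in $\mplusxpdiscl$. (Condition~\ref{item:disc-set-summation} also forces any atom of $\mu$ to be located at a continuous $w$, so that any coincident points contribute only continuous summands and leave the $J_1$-continuity of $\Phi$ undisturbed.) Combining the three continuity facts, $M$ lies almost surely in the continuity set of $h$, and the continuous mapping theorem gives~(\ref{eq:mapping-summation}) in $(\dil,\jl)$. Finally, for each fixed $t\in I$ the expected number of points $(y,w)$ of $M$ in $B$ with $t\in\disc(w)$ is at most $\mu(\{(y,w)\in\csmssil:t\in\disc(w)\})=0$ by Condition~\ref{item:disc-set-summation}; hence almost surely $t\notin\disc\big(\int yw\,M(\rmd y,\rmd w)\big)$, which is precisely the statement that the limit has no fixed (pure) jump.
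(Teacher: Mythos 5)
Your proposal is correct and follows essentially the same route as the paper: both factor the map into a restriction to $(\epsilon,\infty)\times\sil$, a pushforward under $(y,w)\mapsto yw$, and the summation functional $\Phi$ of \autoref{cor:csmsdil}, and then verify via Conditions~\ref{item:disc-set-summation} and~\ref{item:epsilon-cond-summation} that $M$ almost surely lies in the continuity set (the paper citing \autoref{rem:iid-no-common jumps} where you use the second factorial moment measure, which amounts to the same computation). The argument that the limit has no pure jump is likewise the one the paper intends.
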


\begin{proof}
Let us define the mapping $\psi:\csmssil\to\csmsdi^\ell$ by
$$
\psi(y,w) = \begin{cases}
  yw & \text{ if $y<\infty$,}\\
  0 & \text{ otherwise.}
\end{cases}
$$
Let further $\Psi:\mplusxsil\to\mplusxdil$ be the mapping defined by
$$
[\Psi(m)](A)=m\left(\psi^{-1}(A)\cap ((\epsilon,\infty)\times\sil)\right) \;,
$$
for all Borel subsets $A$ in $\mplusxdil$. Since 
$$
\partial (A\cap((\epsilon,\infty)\times\sil))\subset\partial
A\cap(\{\epsilon,\infty\}\times\sil))\;,
$$ 
we have that $m\mapsto m(\cdot\cap((\epsilon,\infty)\times\sil))$ is continuous
from $\mplusxsil$ to $\mplusxsil$  on the set
$$
\mathcal{A}_1=\left\{\mu\in\mplusxsil\,:\,\mu(\{\epsilon,\infty\}\times\sil)\right\}=0\;.
$$
Using the continuity of $\psi$ on $(0,\infty)\times\sil$, it is easy to show
that $m\mapsto m\circ\psi^{-1}$ is continuous on
$$
\mathcal{A}_2 = \left\{\mu\in\mplusxsil\,:\,\exists
  M>0,\,\mu([M,\infty]\times\sil)=0\right\}\;.
$$
Hence $\Psi$ is continuous on
$\mathcal{A}=\mathcal{A}_1\cap\mathcal{A}_2$. With~\autoref{cor:csmsdil}, we conclude that 
$$
m\mapsto  \int_{(\epsilon,\infty)} \int_{\sil} y w \; m(\mathrm{d}y,\mathrm{d}w) =\int w \; \Psi(m)(\rmd w) 
$$
is continuous as a mapping from $\Psi^{-1}(\mplusxpdi)$ endowed with the
$\weakdiese$ topology to $(\dil,\jl)$ on the set
$\Psi^{-1}(\mplusxpdisc)\cap\mathcal{A}$. Thus the weak
convergence~(\ref{eq:mapping-summation}) follows from the continuous mapping
theorem, and by observing that the sequence $(M_n)$ belongs to
$\Psi^{-1}(\mplusxpdi)$ for all $n$ and that, by
Conditions~\ref{item:disc-set-summation} and~\ref{item:epsilon-cond-summation},  
$M$ belongs to $\Psi^{-1}(\mplusxpdisc)\cap\mathcal{A}$ a.s.  (see \autoref{rem:iid-no-common jumps}).

The fact that the limit has no pure jump also follows from Condition~\ref{item:disc-set-summation}.
\end{proof}

\subsection{Convergence in $\dil$ based on point process convergence}

The truncation approach is usual in the context of regular variation to exhibit
$\alpha$-stable approximations of the empirical mean of an infinite variance
sequence of random variables. 

The proof relies on separating small jumps and big jumps and rely on point
process convergence.  In the following result, we have gathered the main steps
of this approach. To our knowledge, such a result is not available in this
degree of generality.

\begin{theorem}
  \label{theo:francois}
  Let $\{N_n,\,n\geq1\}$ be a sequence of finite point processes on $\mathcal X$
  and $N$ be a Poisson point process on $\csmssil$ with mean measure
  $\mu$. Define, for all $n\geq1$ and $\epsilon>0$,
  \begin{align*}
    S_n &= \int_{(0,\infty)} \int_{\sil} y w \; N_n(\mathrm{d}y,\mathrm{d}w) \;
    , \ \ S_n^{<\epsilon} = \int_{(0,\epsilon]} \int_{\sil} y w \;
    N_n(\mathrm{d}y,\mathrm{d}w) \; , \\
    Z_\epsilon& = \int_{(\epsilon,\infty)} \int_{\sil} y w \;
    N(\mathrm{d}y,\mathrm{d}w) \; ,
  \end{align*}
  which are well defined in $\dil$ since $N$ and $N_n$ have finite supports in
  $(\epsilon,\infty)\times\sil$ and $(0,\infty)\times\sil$, respectively.
  Assume that the following assertions hold.
\begin{enumerate}[label={\rm (B-\roman*)}]
\item\label{item:thm-ppp-hyp-conv} $N_n\Rightarrow N$ in $(\mplusxsil,\mathcal
  B(\mplusxsil))$.
\item\label{item:thm-ppp-hyp-disc-set} For all $t\in I$,
  $\mu(\{(y,x)\in\csmssil\;,\; t\in\disc(x))\})=0$ and
  $\mu(\{\infty\}\times\sil)=0$.
\item\label{item:moment-levy} $\displaystyle\int_{(0,1]}
  y^2\;\mu(\rmd{y},\sil)<\infty$.
\item\label{item:thm-ppp-hyp-unif-int} For each $\epsilon>0$, the sequence
  $\left\{\int_{(\epsilon,\infty)} y \;N_n(\mathrm{d}y,\sil), n\geq1\right\}$
  is uniformly integrable.
\item\label{item:thm-ppp-hyp-negl} The following negligibility condition holds~:
  for all $\eta>0$,
  \begin{equation}
    \label{eq:ppp-negligable}
    \lim_{\epsilon\downarrow0} \limsup_{n\to\infty} \pr \left(
      \normil{S_n^{<\epsilon}-\esp[S_n^{<\epsilon}]} > \eta \right) = 0\;,
  \end{equation}
\end{enumerate}
Then  the following assertions hold.
\begin{enumerate}[label={\rm (C\arabic*)}]
\item\label{item:thm-ppp-conc1} For each $\epsilon>0$, $Z_\epsilon \in \dil$,
  $\esp[Z_\epsilon]\in\dil$ and $Z_\epsilon-\esp[Z_\epsilon]$ converges weakly
  in $(\dil,\jl)$ to a process $\bar Z$ as $\epsilon\to0$.
\item\label{item:thm-ppp-conc2} $S_n-\esp[S_n]$ converges weakly in $(\dil,\jl)$
  to $\bar Z$.
\end{enumerate}
\end{theorem}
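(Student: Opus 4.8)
The plan is to split each $S_n$ at level $\epsilon$ into a big-jump part $S_n^{\geq\epsilon}=\int_{(\epsilon,\infty)}\int_{\sil} yw\,N_n(\rmd y,\rmd w)$ and the small-jump part $S_n^{<\epsilon}$, to identify the weak limit of the centered big-jump part for each fixed $\epsilon$, then to let $\epsilon\downarrow0$ in that limit, and finally to glue the two regimes together with the approximation theorem for weak convergence (\citet[Theorem~4.2]{billingsley:1968}). Throughout I would restrict $\epsilon$ to the co-countable set of levels with $\mu(\{\epsilon\}\times\sil)=0$, so that, together with~\ref{item:thm-ppp-hyp-disc-set}, the measure $\mu$ meets Conditions~\ref{item:disc-set-summation} and~\ref{item:epsilon-cond-summation} of \autoref{theo:summation}.

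For fixed such $\epsilon$, I would first invoke \autoref{theo:summation} to get $S_n^{\geq\epsilon}\Rightarrow Z_\epsilon$ in $(\dil,\jl)$ with $Z_\epsilon$ having no pure jump. Since $\normil{w}=1$ on $\sil$, one has $\normil{S_n^{\geq\epsilon}}\leq\int_{(\epsilon,\infty)} y\,N_n(\rmd y,\sil)$, which is uniformly integrable by~\ref{item:thm-ppp-hyp-unif-int}; passing this bound to the limit also gives $\int_{(\epsilon,\infty)} y\,\mu(\rmd y,\sil)<\infty$, so that $\esp[Z_\epsilon]\in\dil$ is well defined. \autoref{cor:esp-cont} then yields $\esp[S_n^{\geq\epsilon}]\to\esp[Z_\epsilon]$ in $(\dil,\jl)$ with continuous, hence uniform, limit. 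Subtracting a uniformly convergent sequence with continuous limit preserves $J_1$-convergence, so $U_n^\epsilon:=S_n^{\geq\epsilon}-\esp[S_n^{\geq\epsilon}]\Rightarrow \bar Z_\epsilon:=Z_\epsilon-\esp[Z_\epsilon]$, which already gives the first assertions of~\ref{item:thm-ppp-conc1}.

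The crux is to produce the weak limit of $\bar Z_\epsilon$ as $\epsilon\downarrow0$. Since no modulus-of-continuity control on $\mu$ is available, the idea is to transfer the negligibility~\ref{item:thm-ppp-hyp-negl}, stated for the arrays $N_n$, to the limit. For good levels $\epsilon'<\epsilon$ I would consider the centered annular sum $V_n=\int_{(\epsilon',\epsilon]}\int_{\sil} yw\,N_n(\rmd y,\rmd w)-\esp[\,\cdot\,]$; the argument of \autoref{theo:summation} applied on the region $(\epsilon',\epsilon]$, which is bounded away from $0$ and $\infty$, together with \autoref{cor:esp-cont}, gives $V_n\Rightarrow \bar Z_{\epsilon'}-\bar Z_\epsilon$. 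Because $\normil{\cdot}$ is invariant under time change, it is $J_1$-continuous, so the continuous mapping theorem and the portmanteau inequality on $\{>\eta\}$ give $\pr(\normil{\bar Z_{\epsilon'}-\bar Z_\epsilon}>\eta)\leq\limsup_n\pr(\normil{V_n}>\eta)$. Writing $V_n=(S_n^{<\epsilon}-\esp[S_n^{<\epsilon}])-(S_n^{<\epsilon'}-\esp[S_n^{<\epsilon'}])$ and using the triangle inequality bounds the right-hand side by the two quantities in~(\ref{eq:ppp-negligable}); taking the supremum over $\epsilon'<\epsilon$ and letting $\epsilon\downarrow0$, Condition~\ref{item:thm-ppp-hyp-negl} makes $\{\bar Z_\epsilon\}$ Cauchy in probability for $\normil{\cdot}$, hence for the complete separable $J_1$ metric. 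Thus $\bar Z_\epsilon$ converges in probability, a fortiori weakly, to some $\bar Z\in\dil$, proving~\ref{item:thm-ppp-conc1}.

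For~\ref{item:thm-ppp-conc2} I would note $S_n-\esp[S_n]-U_n^\epsilon=S_n^{<\epsilon}-\esp[S_n^{<\epsilon}]$, whose $\normil{\cdot}$-norm, and therefore its $J_1$-distance to $0$, is controlled by~(\ref{eq:ppp-negligable}); combining this with $U_n^\epsilon\Rightarrow\bar Z_\epsilon$ and $\bar Z_\epsilon\Rightarrow\bar Z$ through the approximation theorem \citet[Theorem~4.2]{billingsley:1968} delivers $S_n-\esp[S_n]\Rightarrow\bar Z$. The main obstacle is the previous paragraph: lacking tightness hypotheses on $\mu$, one cannot directly bound the modulus of continuity of the compensated small-jump integrals, and the whole point is to borrow the negligibility assumed for the $N_n$ and push it to the limit via continuity of $\normil{\cdot}$ and portmanteau. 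I expect Condition~\ref{item:moment-levy} to be unnecessary for the weak convergence itself and to serve only in identifying the finite-dimensional law of $\bar Z$, by ensuring the pointwise $L^2$-convergence of $\int\int yw\,(N-\mu)(\rmd y,\rmd w)$, when \autoref{theo:francois} is used downstream.
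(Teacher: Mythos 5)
Your argument is correct, and up to the identification of the limit in \ref{item:thm-ppp-conc1} it tracks the paper's proof step for step: the convergence of the centered big-jump part via \autoref{theo:summation} and \autoref{cor:esp-cont}, the reduction of \ref{item:thm-ppp-conc2} to \ref{item:thm-ppp-conc1} by \citet[Theorem~4.2]{billingsley:1968}, and even your key estimate --- transferring the negligibility condition \ref{item:thm-ppp-hyp-negl} to the limit by writing the centered annular sum as a difference of two centered small-jump sums and applying the continuous mapping theorem and Portmanteau to the $J_1$-continuous sup-norm --- is exactly the paper's bound~(\ref{eq:epseps0}). Where you genuinely diverge is in what you do with that estimate. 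The paper uses it only as an auxiliary uniform-in-$\epsilon$ control inside a classical fidi-plus-tightness argument: the finite-dimensional limits of $\bar Z_\epsilon$ are produced by the $L^2$ computation resting on Condition~\ref{item:moment-levy}, and tightness is checked through Billingsley's criterion for the modulus $w^{\prime\prime}$ and the endpoint oscillations, via $w^{\prime\prime}(\bar Z_\epsilon,\delta)\le w^{\prime\prime}(\bar Z_{\epsilon_0},\delta)+2\normil{\bar Z_\epsilon-\bar Z_{\epsilon_0}}$. You instead read the same bound as a Cauchy-in-probability statement for the family $\{\bar Z_\epsilon\}$ in the uniform norm (legitimate, since all these variables are functionals of the single Poisson process $N$), and conclude by completeness: along a subsequence the convergence is almost surely uniform, the limit is c\`adl\`ag as a uniform limit of c\`adl\`ag functions, and convergence in probability --- hence weak convergence --- of the whole family follows. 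This is shorter, dispenses with the modulus computations entirely, and, as you correctly observe, leaves Condition~\ref{item:moment-levy} unused for the convergence statement itself (in the paper it serves to construct the pointwise limit, and downstream to identify $\bar Z$ as the stable integral). The only points worth making explicit are ones you already flag: the restriction of $\epsilon$ to levels with $\mu(\{\epsilon\}\times\sil)=0$ when invoking \autoref{theo:summation} on a half-line or an annulus (a restriction the paper's own display~(\ref{eq:conv-of-increments}) shares), and the fact that $\{x:\normil{x}>\eta\}$ is $J_1$-open so that the Portmanteau inequality runs in the direction you need.
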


\begin{proof}
  For $\epsilon>0$, we define
$$  
S_n^{>\epsilon} = \int_{(\epsilon,\infty)} \int_{\sil} y w
N_n(\mathrm{d}y,\mathrm{d}w), \quad \bar S_n^{>\epsilon} = S_{n}^{>\epsilon} -
\esp[S_n^{>\epsilon}] \; , \; \bar S_n^{<\epsilon} = S_{n}^{<\epsilon} -
\esp[S_n^{<\epsilon}] \; ,
$$
which are random elements of $\dil$.  By \autoref{theo:summation} and
Conditions~\ref{item:thm-ppp-hyp-conv} and~\ref{item:thm-ppp-hyp-disc-set}, we
have that $S_n^{>\epsilon}$ converges weakly in $(\dil,\jl)$ to $Z_\epsilon$, provided that
  $\mu(\{\epsilon\}\times\sil)=0$,  and
$Z_\epsilon$ has no pure jump.

Since $\normil{S_n^{>\epsilon}}\leq \int_{(\epsilon,\infty)} y \, N_n(\rmd
y,\sil)$, by Condition~\ref{item:thm-ppp-hyp-unif-int}, we get that
$\{\normil{S_n^{>\epsilon}},\,n\geq1\}$ is uniformly integrable.  Applying
\autoref{cor:esp-cont}, we get that $\esp[ S_n^{>\epsilon}]$ converges to
$\esp[Z_\epsilon]$ in $(\dil,\jl)$ and that $\esp[Z_\epsilon]$ is continuous on
$I$.  Thus addition is continuous at $(Z_{\epsilon},\esp[Z_{\epsilon}])$. See
\citet[p.~84]{whitt:2002}.  We obtain that, for all $\epsilon>0$, as
$n\to\infty$,
\begin{equation}
  \label{eq:truncation-easy-part}
  \bar S_n^{>\epsilon}\Rightarrow Z_{\epsilon}-\esp[Z_\epsilon]  \quad \text{ in $\di$}\;.   
\end{equation}
Define $\bar S_n = S_n-\esp[S_n]$. Then
\begin{equation}
  \label{eq:S_n-trunc-decomp}
 \bar S_n = \bar S_n^{>\epsilon} + \bar S_n^{<\epsilon} \;,
\end{equation}
and~(\ref{eq:ppp-negligable}) can be rewritten as
\begin{equation}
  \label{eq:Bill-trick}
  \lim_{\epsilon\to0}\limsup_{n\to\infty}\pr(\normil{\bar S_n - \bar S_n^{>\epsilon}}>\eta) =
  0\;.
\end{equation}
By \citet[Theorem~4.2]{billingsley:1968}, Assertion~\ref{item:thm-ppp-conc1}
and~(\ref{eq:Bill-trick}) imply Assertion~\ref{item:thm-ppp-conc2}.  Hence, to
conclude the proof, it remains to prove Assertion~\ref{item:thm-ppp-conc1}, that
is, $\bar Z_{\epsilon}$ converges weakly in $(\dil,\jl)$ to a process $\bar Z$.  For
all $t\in I$ and $0<\epsilon<\epsilon'$, we have
$$
Z_{\epsilon}(t)-Z_{\epsilon'}(t)=\int_{(\epsilon,\epsilon']}\int_{\sil}y\,w(t)\;N(\rmd y,\rmd w)\;,
$$
where $N$ is a Poisson process with intensity measure $\mu$. Thus, denoting by
$|a|$ the Euclidean norm of vector $a$ and by $\mathrm{Tr}(A)$ the trace of
matrix $A$, we have
\begin{align*}
  \esp[|\bar Z_{\epsilon}(t)-\bar Z_{\epsilon'}(t)|^2] & = \mathrm{Tr} \left(
    \mathrm{Cov}(Z_{\epsilon}(t) - Z_{\epsilon'}(t)) \right)  \\
  & = \int_{(\epsilon,\epsilon']} \int_{\sil} y^2 \, |w(t)|^2 \; \mu(\rmd y,\rmd w) \\
  &\leq \ell \, \int_{(\epsilon,\epsilon']} y^2 \; \mu(\rmd y,\sil) \; .
\end{align*}
We deduce from~\ref{item:moment-levy} that $\bar Z_{\epsilon}(t)-\bar Z_{1}(t)$
converges in $L^2$ as $\epsilon$ tends to $0$. Thus there exists a process $\bar
Z$ such that $\bar Z_{\epsilon}$ converges to $\bar Z$ pointwise in probability,
hence in the sense of finite dimensional distributions.  To obtain the
convergence in $(\dil,\jl)$, since we use the product topology in $\dil$, it
only remains to show the tightness of each component. Thus, hereafter we assume
that $\ell=1$.  Denote, for $x\in\di$ and $\delta>0$,
\begin{align} 
   \label{eq:def-w"}
  w^{\prime\prime}(x,\delta) &= \sup\{|x(t)-x(s)| \wedge |x(u)-x(t)| \; ; \ s
  \leq t \leq u\in I \; , \ |u-s| \leq \delta\}\;.
\end{align}
By \citet[Theorem~15.3]{billingsley:1968}, it is sufficient to
prove that, for all $\eta>0$,
\begin{align}
  \label{eq:tight-condzero}
  \lim_{A\to\infty} \sup_{0<\epsilon\leq1} \pr(\normi{\bar{Z}_{\epsilon}} > A)  = 0 \; , \\
  \label{eq:tightness-module}
  \lim_{\delta\downarrow0} \limsup_{\epsilon\downarrow0}  \pr(w^{\prime\prime}(\bar{Z}_{\epsilon},\delta) >\eta) = 0 \; ,\\
  \label{eq:tightness-module-bordg}
  \lim_{\delta\downarrow0} \limsup_{\epsilon\downarrow0}  \pr(\osc(\bar{Z}_{\epsilon};[a,a+\delta)) >\eta) = 0 \; ,\\
  \label{eq:tightness-module-bordd}
  \lim_{\delta\downarrow0} \limsup_{\epsilon\downarrow0}  \pr(\osc(\bar{Z}_{\epsilon};[b-\delta,b)) >\eta) = 0 \; ,
\end{align}
where $I=[a,b]$ and $\osc$ is defined in~(\ref{eq:def-osc}).  We start by proving~(\ref{eq:tight-condzero}).  For any
$\epsilon_0\in(0,1]$ and $\epsilon\in[\epsilon_0,1]$, we have
$\normi{\bar{Z}_\epsilon} \leq \int_{(\epsilon_0,\infty)} y N(\rmd y,\si)$,
whence
\begin{align*}
  \sup_{\epsilon_0\leq\epsilon\leq1} \pr(\normi{\bar{Z}_{\epsilon}} > A)
  \leq A^{-1} \esp\left[ \int_{(\epsilon_0,\infty)} y N(\rmd y,\si) \right]  \; ,
\end{align*}
which is finite by Condition~\ref{item:thm-ppp-hyp-unif-int}.

This yields that $\lim_{A\to\infty} \sup_{\epsilon_0\leq\epsilon\leq1}
\pr(\normi{W_{\epsilon}} > A) = 0$ and to conclude the proof
of~(\ref{eq:tight-condzero}), we only need to show that, for any $\eta>0$,
\begin{equation}
  \label{eq:epseps0}
  \lim_{\epsilon_0\downarrow0} \sup_{0<\epsilon<\epsilon_0}
  \pr\left(\normi{\bar{Z}_{\epsilon}-\bar{Z}_{\epsilon_0}}>\eta\right) =0\;.
\end{equation}
The arguments leading to~(\ref{eq:truncation-easy-part}) can be used to show
that, for all $0<\epsilon<\epsilon_0$,
\begin{equation}
  \label{eq:conv-of-increments}
  \bar S_n^{>\epsilon_0} - \bar S_n^{>\epsilon}\Rightarrow \bar
  Z_{\epsilon_0}-\bar Z_{\epsilon}\quad\text{in $\di$}\;.  
\end{equation}
(although the latter is not a consequence of~(\ref{eq:truncation-easy-part})
because $Z_{\epsilon_0}$ and $Z_{\epsilon}$ have common jumps).  By definition
(see~(\ref{eq:S_n-trunc-decomp})), we have $ \bar{S}_n^{<\epsilon_0} -
\bar{S}_n^{<\epsilon} = \bar{S}_n^{>\epsilon} - \bar{S}_n^{>\epsilon_0}$.
By~(\ref{eq:conv-of-increments}) and the continuous mapping theorem, we get that
$\normi{\bar S_n^{<\epsilon_0} - \bar S_n^{<\epsilon}} \Rightarrow
\normi{\bar{Z}_{\epsilon_0} - \bar{Z}_{\epsilon}}$. Thus, by the
Portmanteau Theorem, for all $\eta>0$,
\begin{align*}
  \pr(\normi{\bar{Z}_{\epsilon_0}-\bar{Z}_{\epsilon}} \geq \eta) & =
  \limsup_{n\to\infty} \pr(\normi{\bar{S}_n^{<\epsilon_0} - \bar{S}_n^{<\epsilon}}\geq  \eta) \\
  & \leq \limsup_{n\to\infty} \pr(\normi{\bar{S}_n^{<\epsilon_0}} \geq
  \eta/2) + \limsup_{n\to\infty} \pr(\normi{\bar{S}_n^{<\epsilon}} \geq
  \eta/2) \; .
\end{align*}
We conclude by applying Condition~\ref{item:thm-ppp-hyp-negl} which precisely
states that both terms in the right-hand side tend to zero as $\epsilon_0$ tends
to~0, for any $\eta>0$.  This yields~(\ref{eq:epseps0})
and~(\ref{eq:tight-condzero}) follows.

Define now the modulus of continuity of a function $x\in\di$ by
\begin{align*}
  w(x,\delta) = \sup\{|x(t)-x(s)| \; ,  \ s,  t \in I  \; ,  \ |t-s|\leq \delta \} \; .
\end{align*}
We shall rely on the fact that, for any $x,y\in\di$, 
\begin{align*} 
  w^{\prime\prime}(x+y,\delta) \leq w^{\prime\prime}(x,\delta) + w(y,\delta) \; .
\end{align*}
Note that this inequality is no longer true if $w(y,\delta)$ is replaced by
$w^{\prime\prime}(y,\delta)$.  We get that, for any $0<\epsilon<\epsilon_0$ and
$\delta>0$,
\begin{align}
  \nonumber w^{\prime\prime}(\bar{Z}_{\epsilon},\delta) &\leq
  w^{\prime\prime}(\bar{Z}_{\epsilon_0},\delta) +  w(\bar{Z}_{\epsilon}-\bar{Z}_{\epsilon_0},\delta) \\
  \label{eq:modZeps} & \leq w^{\prime\prime}(\bar{Z}_{\epsilon_0},\delta) +
  2\normi{\bar{Z}_{\epsilon}-\bar{Z}_{\epsilon_0}} \; .
\end{align}
Since $\bar{Z}_{\epsilon_0}$ is in $\di$, we have, for any fixed $\epsilon_0>0$,
$$
\lim_{\delta\to0}\pr\left(w^{\prime\prime}(\bar{Z}_{\epsilon_0},\delta)>\eta\right) = 0 \;.
$$
Hence, with~(\ref{eq:epseps0}), we conclude that~(\ref{eq:tightness-module})
holds. Similarly, since, for each subinterval $T$, we have
$$
\osc(\bar{Z}_{\epsilon};T) \leq \osc(\bar{Z}_{\epsilon_0};T) + 2 \,
\normi{\bar{Z}_{\epsilon} - \bar{Z}_{\epsilon_0}} \; ,
$$
so we obtain~(\ref{eq:tightness-module-bordg})
and~(\ref{eq:tightness-module-bordd}). This concludes the proof.
\end{proof}

\subsection{Regular variation in $\mathcal D$ and point process convergence}
\label{sec:reg-var-pp}

Let now $\{X_{i,n},\,1\leq i\leq m_n\}$ be an array of independent random
elements in $\di$ and define the point process of exceedances $N_n$ on
$(0,\infty]\times\sil$ by
\begin{align}
  \label{eq:Nndef}
  N_n = \sum_{i=1}^{m_n} \pointmass_{\normil{X_{i,n}},\frac{X_{i,n}}{\normi{X_{i,n}}}}
    \; ,
\end{align}
with the convention that $\pointmass_{0,0/0}$ is the null mass.  If the
processes $X_{n,i}$, $1 \leq i \leq m_n$ are i.i.d. for each $n$, then it is
shown in \citet[Theorem~2.4]{dehaan:lin:2001} that
Condition~(\ref{eq:reg-var-metric}) implies the convergence of the sequence of
point processes $N_n$ to a Poisson point process on $\di$. We slightly extend
here this result to triangular arrays of vector valued processes.

Let $N$ be a Poisson point process on $\csmssil=(0,\infty]\times\sil$ (see
\autoref{sec:weak-conv-rand-meas}) with mean measure $\mu_\alpha$ defined by
$\mu_\alpha(\rmd y\rmd w)= \alpha y^{-\alpha-1}\mathrm d y\nu(\rmd w)$.

\begin{proposition}\label{prop:RVpoitmeasureconv-array}
  Conditions~(\ref{eq:reg-var-metric-arrays-lim})
  and~(\ref{eq:reg-var-metric-arrays-sup}) in
  \autoref{theo:conv-stable-D-array} imply the weak
  convergence of $N_n$ to $N$ in  $(\mplusxsil,\mathcal B(\mplusxsil))$.
\end{proposition}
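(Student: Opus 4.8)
The plan is to prove $N_n \Rightarrow N$ through convergence of Laplace functionals, which by \citet[Proposition~11.1.VIII]{daley:vere-jones:bookvolII} (see~(\ref{eq:laplace-general})) is equivalent to weak convergence in $\mplusxsil$. Because $\csmssil$ carries the metric $d$ for which the unbounded region is $\{r\to0\}$, i.e.\ $\{0\}\times\sil$, a function has bounded support exactly when it vanishes on $(0,\eta)\times\sil$ for some $\eta>0$. Thus it suffices to show, for every bounded continuous nonnegative $f$ on $\csmssil$ vanishing on some such $(0,\eta)\times\sil$, that
\begin{equation*}
  \esp[\rme^{-N_n(f)}] \longrightarrow \exp\left(-\int_{\csmssil}(1-\rme^{-f})\,\rmd\mu_\alpha\right)\;,
\end{equation*}
the right-hand side being the Laplace functional of the Poisson process $N$ with mean measure $\mu_\alpha$. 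This is the classical route for the convergence of a null array of independent points to a Poisson process: Conditions~(\ref{eq:reg-var-metric-arrays-lim}) and~(\ref{eq:reg-var-metric-arrays-sup}) will play the roles of convergence of the intensity and of infinitesimality, respectively.

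Writing $\xi_{i,n}=(\normil{X_{i,n}},X_{i,n}/\normil{X_{i,n}})$ for the $i$-th point (the null mass when $X_{i,n}=0$), the independence of $X_{1,n},\dots,X_{m_n,n}$ yields
\begin{equation*}
  \esp[\rme^{-N_n(f)}] = \prod_{i=1}^{m_n}\esp[\rme^{-f(\xi_{i,n})}] = \prod_{i=1}^{m_n}(1-a_{i,n})\;,\qquad a_{i,n}=\esp[1-\rme^{-f(\xi_{i,n})}]\;.
\end{equation*}
Since $f$ vanishes on $(0,\eta)\times\sil$, we have $0\leq a_{i,n}\leq\pr(\normil{X_{i,n}}\geq\eta)$, so~(\ref{eq:reg-var-metric-arrays-sup}) gives the infinitesimality $\max_i a_{i,n}\to0$, while~(\ref{eq:reg-var-metric-arrays-lim}) taken with $A=\sil$ shows that $\sum_i a_{i,n}$ stays bounded. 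The expansion $\log(1-a)=-a+O(a^2)$ then gives
\begin{equation*}
  \log\esp[\rme^{-N_n(f)}] = -\sum_{i=1}^{m_n}a_{i,n}+O\Big(\max_i a_{i,n}\sum_i a_{i,n}\Big)\;,
\end{equation*}
so the whole problem reduces to proving $\sum_{i=1}^{m_n}a_{i,n}\to\int_{\csmssil}(1-\rme^{-f})\,\rmd\mu_\alpha$.

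To this end I would introduce the intensity measures $\mu_n=\esp[N_n]=\sum_{i=1}^{m_n}\pr(\xi_{i,n}\in\cdot\,)$, so that $\sum_i a_{i,n}=\int(1-\rme^{-f})\,\rmd\mu_n$, and show that $\mu_n\weakdiese\mu_\alpha$ in $\mplusxsil$. Condition~(\ref{eq:reg-var-metric-arrays-lim}) is precisely the statement that $\mu_n((x,\infty]\times A)\to x^{-\alpha}\nu(A)=\mu_\alpha((x,\infty]\times A)$ for every $x>0$ and every $A$ with $\nu(\partial A)=0$. Because $\mu_\alpha$ has no atom in the $y$ coordinate, the inclusion $\partial((x,\infty]\times A)\subseteq(\{x\}\times\overline A)\cup([x,\infty]\times\partial A)$ shows these product sets are $\mu_\alpha$-continuity sets; as they form a convergence-determining $\pi$-system of bounded sets generating the relevant Borel $\sigma$-field, the convergence on them upgrades to $\mu_n\weakdiese\mu_\alpha$. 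Since $1-\rme^{-f}$ is bounded, continuous and supported in $[\eta,\infty]\times\sil$, the definition of $w^\#$ convergence then gives $\int(1-\rme^{-f})\,\rmd\mu_n\to\int(1-\rme^{-f})\,\rmd\mu_\alpha$, which completes the argument.

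The infinitesimality estimate and the product expansion are routine, so the main obstacle is the last step: upgrading the convergence on the rectangles $(x,\infty]\times A$ that~(\ref{eq:reg-var-metric-arrays-lim}) literally supplies to full $w^\#$ convergence of the intensity measures, i.e.\ to convergence of integrals against an arbitrary continuous bounded function of bounded support. This rests on checking that the rectangles are $\mu_\alpha$-continuity sets and form a convergence-determining class, the latter being the standard device underlying regular variation on product spaces; a little care is needed because the unbounded region of the $w^\#$ topology is the whole set $\{0\}\times\sil$ rather than a single point at infinity.
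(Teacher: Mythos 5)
Your proposal is correct and follows essentially the same route as the paper's proof: convergence of Laplace functionals for bounded continuous $f$ vanishing near $\{0\}\times\sil$, the product/logarithm expansion with the quadratic error controlled by~(\ref{eq:reg-var-metric-arrays-sup}) (infinitesimality) and~(\ref{eq:reg-var-metric-arrays-lim}) (boundedness of $\sum_i a_{i,n}$), and then identification of the limit of $\sum_i a_{i,n}$ via~(\ref{eq:reg-var-metric-arrays-lim}). The only difference is that you make explicit the final step of upgrading convergence on the rectangles $(x,\infty]\times A$ to $w^{\#}$ convergence of the intensity measures, which the paper dispatches with the single phrase ``we conclude by applying~(\ref{eq:reg-var-metric-arrays-lim}) again.''
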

\begin{proof}
 As explained in \autoref{sec:weak-conv-rand-meas}, we only need to check
  the convergence
  \begin{equation}
    \label{eq:laplace-conv}
    \lim_{n\to\infty}\esp [\rme^{-N_n(f)}] = \esp \rme^{-N(f)} 
  \end{equation}
  for all bounded continuous functions $f$ on $(0,\infty]\times\sil$
  and vanishing on $(0,\epsilon)\times\sil$ for some $\epsilon>0$. Consider such
  a function $f$. We have
$$
\log \esp [\rme^{-N_n(f)}] =  \sum_{i=1}^{m_n}  \log \left(1 +
  \esp\left[g(\normil{X_{i,n}},X_{i,n}/\normil{X_{i,n}})\right]\right)\;,
$$
where we denoted $g:(y,w)\mapsto\rme^{-f(y,w)}-1$, which is continuous and
bounded with the same support as $f$. Moreover $g$ is lower bounded by some
constant $A>-1$. It follows that there exists a positive constant $C$ such that
\begin{multline*}
   \left|\log \left(1 +
    \esp\left[g(\normil{X_{i,n}},X_{i,n}/\normil{X_{i,n}})\right]\right) -
  \esp\left[g(\normil{X_{i,n}},X_{i,n}/\normil{X_{i,n}})\right]\right|\\
\leq
C\, \pr^2(\normil{X_{i,n}}>\epsilon) \; .
\end{multline*}
Define  $\delta_n =\max_{i=1,\dots,n}\pr(\normil{X_{i,n}}>\epsilon)$. Then 
\begin{align*}
  \left| \log \esp [\rme^{-N_n(f)}] -  \sum_{i=1}^{m_n}  
  \esp\left[g(\normil{X_{i,n}},X_{i,n}/\normil{X_{i,n}})\right]\right| 
\leq C \delta_n \sum_{i=1}^{m_n}  \pr(\normil{X_{i,n}}>\epsilon) \; .
\end{align*}
Since $\delta_n=o(1)$ by~(\ref{eq:reg-var-metric-arrays-sup}) and
$\sum_{i=1}^{m_n}  \pr(\normil{X_{i,n}}>\epsilon)=O(1)$ by~(\ref{eq:reg-var-metric-arrays-lim}), we obtain 
\begin{align*}
  \log \esp [\rme^{-N_n(f)}] =   \sum_{i=1}^{m_n}  
  \esp\left[g(\normil{X_{i,n}},X_{i,n}/\normil{X_{i,n}})\right] + o(1)\;.
\end{align*}
We conclude by applying (\ref{eq:reg-var-metric-arrays-lim}) again. 
\end{proof}

\subsection{A criterion for negligibility}

Condition~\ref{item:thm-ppp-hyp-negl} is a negligibility condition in the
sup-norm. It can be checked separately on each component of
$S_n^{<\epsilon}-\esp[S_n^{<\epsilon}]$.  We give here a sufficient condition
based on a tightness criterion.

\begin{lemma}
  \label{lem:negl-via-w''} 
  Let $\{U_{\epsilon,n},\,\epsilon>0,n\geq1\}$ be a collection of random elements in $\di$ such that, for all $t\in I$,
\begin{equation}
  \label{eq:var-negl-cond}
  \lim_{\epsilon\to0}\limsup_{n\to\infty} \var(U_{\epsilon,n}(t)) = 0 \;,
\end{equation}
and, for all $\eta>0$,
\begin{equation}
  \label{eq:tightness<epsilonbis-generic}
  \lim_{\delta\to0} \sup_{0<\epsilon\leq1} \limsup_{n\to\infty} 
  \pr(w^{\prime\prime}(U_{\epsilon,n},\delta) >\eta) =0 \;,  
\end{equation}
where $w^{\prime\prime}$ is defined in~(\ref{eq:def-w"}). Then  $\{U_{\epsilon,n},\,\epsilon>0,\,n\geq1\}$
satisfies the negligibility condition~\ref{item:thm-ppp-hyp-negl}, that is, for all $\eta>0$,
  \begin{equation}
    \label{eq:ppp-negligable-generic}
    \lim_{\epsilon\to0} \limsup_{n\to\infty} \pr \left(
      \normi{U_{\epsilon,n}} > \eta \right) = 0\;.
  \end{equation} 
\end{lemma}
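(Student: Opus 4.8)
The plan is to reduce the sup-norm $\normi{U_{\epsilon,n}}$ to a finite collection of point evaluations together with the modulus $w''$, via an elementary deterministic inequality, and then to kill the former using the pointwise variance assumption~(\ref{eq:var-negl-cond}) and the latter using the modulus assumption~(\ref{eq:tightness<epsilonbis-generic}). The only subtlety is the order in which the three limits ($n\to\infty$, $\epsilon\to0$, $\delta\to0$) are taken.

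First I would establish the key deterministic bound. Write $I=[a,b]$ and, for $\delta>0$, choose a grid $a=t_0<t_1<\dots<t_K=b$ with mesh $\max_{k}(t_k-t_{k-1})\le\delta$. For any $x\in\di$ and any $t\in[t_{k-1},t_k]$, taking $s=t_{k-1}$ and $u=t_k$ in the definition~(\ref{eq:def-w"}) of $w''$ gives $|x(t)-x(t_{k-1})|\wedge|x(t)-x(t_k)|\le w''(x,\delta)$, hence $|x(t)|\le\max(|x(t_{k-1})|,|x(t_k)|)+w''(x,\delta)$. Taking the supremum over $t\in I$ yields
\begin{align*}
  \normi{x}\le \max_{0\le k\le K}|x(t_k)|+w''(x,\delta)\;,
\end{align*}
and therefore, for every $\eta>0$,
\begin{align*}
  \{\normi{x}>2\eta\}\subseteq\{w''(x,\delta)>\eta\}\cup\bigcup_{k=0}^{K}\{|x(t_k)|>\eta\}\;.
\end{align*}
I expect this to be the main point of the argument; note that it uses nothing about càdlàg regularity, only the structure of $w''$.

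Applying this with $x=U_{\epsilon,n}$, then the union bound, and finally Chebyshev's inequality on each point evaluation (the $U_{\epsilon,n}$ being centered, as in the application to $\bar S_n^{<\epsilon}=S_n^{<\epsilon}-\esp[S_n^{<\epsilon}]$, so that $\esp[U_{\epsilon,n}(t_k)]=0$), I would obtain, for $0<\epsilon\le1$,
\begin{align*}
  \pr(\normi{U_{\epsilon,n}}>2\eta)\le \pr(w''(U_{\epsilon,n},\delta)>\eta)+\frac1{\eta^2}\sum_{k=0}^{K}\var(U_{\epsilon,n}(t_k))\;.
\end{align*}
I would then freeze $\delta$ (hence the finite grid of size $K=K(\delta)$), take $\limsup_{n\to\infty}$ and then $\lim_{\epsilon\to0}$: the finitely many variance terms vanish by~(\ref{eq:var-negl-cond}), leaving
\begin{align*}
  \limsup_{\epsilon\to0}\limsup_{n\to\infty}\pr(\normi{U_{\epsilon,n}}>2\eta)\le\sup_{0<\epsilon\le1}\limsup_{n\to\infty}\pr(w''(U_{\epsilon,n},\delta)>\eta)\;.
\end{align*}
Since the left-hand side does not depend on $\delta$, letting $\delta\to0$ makes the right-hand side tend to $0$ by~(\ref{eq:tightness<epsilonbis-generic}); as $\eta>0$ is arbitrary this establishes~(\ref{eq:ppp-negligable-generic}).

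The one delicate bookkeeping point is precisely this ordering: the grid size $K(\delta)$ must be held fixed while $n\to\infty$ and $\epsilon\to0$ are performed, and only afterwards may $\delta\to0$. Because $K(\delta)$ is finite for each fixed $\delta$, the variance contribution is a genuine finite sum and poses no difficulty, while the modulus term is already uniform in $\epsilon$ thanks to the $\sup_{0<\epsilon\le1}$ in hypothesis~(\ref{eq:tightness<epsilonbis-generic}). The substantive content of the proof is entirely contained in the deterministic inequality of the first step.
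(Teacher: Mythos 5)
Your proof is correct and follows essentially the same route as the paper's: the key deterministic inequality $\normi{x}\leq w^{\prime\prime}(x,\delta)+\max_k|x(t_k)|$ (which the paper simply cites from Billingsley's proof of Theorem~15.7 and you prove directly), Chebyshev's inequality at the finitely many grid points, and the same careful ordering of the limits in $n$, $\epsilon$ and $\delta$. Both your argument and the paper's implicitly use that $U_{\epsilon,n}(t)$ is centered (as it is in every application of the lemma), so there is no discrepancy on that point.
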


\begin{proof}
By~(\ref{eq:var-negl-cond}) and the Bienaimé-Chebyshev inequality, we get that,
for all $\eta>0$ and $t\in I$,
$$
\lim_{\epsilon\to0}\limsup_{n\to\infty} \pr(|U_{\epsilon,n}(t)|>\eta) =0 \; .
$$
It follows that, for any $p\geq1$, $t_1<\dots<t_p$ and $\eta>0$,
\begin{align}
  \label{eq:conv-p-points}
  \lim_{\epsilon\to0} \limsup_{n\to\infty} \pr(\max_{k=1,\dots,p} 
|U_{\epsilon,n}(t_k)| > \eta) = 0 \; .
\end{align}
Fix some $\zeta>0$. By Condition~(\ref{eq:tightness<epsilonbis-generic}), we
can choose $\delta>0$ such that $\limsup_{n\to\infty}
\pr(w^{\prime\prime}(U_{\epsilon,n},\delta) >\eta) \leq \zeta$ for all
$\epsilon\in(0,1]$.  Note now that, as in \cite[Proof of Theorem~15.7,
Page~131]{billingsley:1968}, for any $\delta>0$, we may find an integer
$m\geq1$ and $t_1<t_1<\dots<t_m$, such that for all $x\in \mathcal D$
\begin{align}
  \label{eq:borne-sup-x}
  \normi{x} \leq w^{\prime\prime}(x,\delta) + \max_{k=1,\dots,m}|x(t_k)| \; .
\end{align}
Thus, by~(\ref{eq:conv-p-points}), we obtain
\begin{multline*}
  \lim_{\epsilon\to0} \limsup_{n\to\infty}  \pr \left(
    \normi{U_{\epsilon,n}} > \eta \right) \\
   \leq \sup_{0<\epsilon\leq1} \limsup_{n\to\infty}
   \pr(w^{\prime\prime}(U_{\epsilon,n},\delta) >\eta) 
   + \lim_{\epsilon\to0} \limsup_{n\to\infty}
  \pr(\max_{k=1,\dots,p} |U_{\epsilon,n}(t_k)| > \eta/2) \leq \zeta \; ,
\end{multline*}
which concludes the proof since $\zeta$ is arbitrary.
\end{proof}

In order to obtain~(\ref{eq:tightness<epsilonbis-generic}), we can use 
the tightness criteria of \citet[Chapter~15]{billingsley:1968}. We then get
the following corollary.

\begin{corollary}
   \label{coro:burkholder}
   Let $X,X_i,i\geq1$, be i.i.d. random elements in $\di$ such that $\normi{X}$
   is regularly varying with index $\alpha\in(1,2)$. Let $\{a_n\}$ be an
   increasing sequence such that
   $\lim_{n\to\infty}n\pr(\normi{X}>a_n)=1$. Assume that there exist
   $p\in(\alpha,2]$, $\gamma>1/2$ and a continuous
   increasing function $F$ on $I$ and a sequence of increasing functions $F_n$
   that converges pointwise (hence uniformly) to $F$ such that
  \begin{subequations}
    \begin{align}
      \sup_{0<\epsilon\leq1} n a_n^{-p }
      \esp[|\bar{X}_{\epsilon,n}(s,t)|^{p}] & \leq \{F_n(t)-F_n(s)\}^{\gamma}
      \; , \label{eq:burk1}      \\
      \sup_{0<\epsilon\leq1} n^{2} a_n^{-2p}
      \esp[|\bar{X}_{\epsilon,n}(s,t)|^{p}|\bar{X}_{\epsilon,n}(t,u)|^p] &
      \leq \{F_n(u)-F_n(s)\}^{2\gamma} \; , \label{eq:burk2}
    \end{align}
  \end{subequations}
where
$$
\bar{X}_{\epsilon,n}(s,t) = \{X(t)-X(s)\}\1{\normi{X}\leq a_n\epsilon} -
\esp\left[ \{X(t)-X(s)\}\1{\normi{X}\leq a_n\epsilon} \right]\;.
$$  
Then the negligibility condition~(\ref{eq:ppp-negligable-generic}) holds with 
$$
U_{\epsilon,n}=a_n^{-1}\sum_{i=1}^n \left\{X_i\1{\normi{X_i}\leq
    a_n\epsilon}-\esp\left[X_i\1{\normi{X_i}\leq a_n\epsilon}\right]\right\}\;.
$$
\end{corollary}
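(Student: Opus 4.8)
The plan is to apply \autoref{lem:negl-via-w''} to the family $\{U_{\epsilon,n}\}$, so that the conclusion~(\ref{eq:ppp-negligable-generic}) reduces to checking the pointwise variance condition~(\ref{eq:var-negl-cond}) and the modulus condition~(\ref{eq:tightness<epsilonbis-generic}) for $U_{\epsilon,n}$. The structure to exploit is that, since the $X_i$ are i.i.d., for every $s<t<u$ in $I$ the increment
$$
U_{\epsilon,n}(t)-U_{\epsilon,n}(s)=a_n^{-1}\sum_{i=1}^n \bar X^{(i)}_{\epsilon,n}(s,t)
$$
is a normalized sum of the $n$ independent and centered variables $\bar X^{(i)}_{\epsilon,n}(s,t)$, which are i.i.d. copies of $\bar X_{\epsilon,n}(s,t)$. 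It is precisely this independence-plus-centering structure that lets us turn the single-path bounds~(\ref{eq:burk1}) and~(\ref{eq:burk2}) into a control of $U_{\epsilon,n}$.

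For~(\ref{eq:var-negl-cond}), since $|X(t)|\le\normi{X}$, I would write
$$
\var(U_{\epsilon,n}(t))=n\,a_n^{-2}\,\var(X(t)\1{\normi{X}\le a_n\epsilon})\le n\,a_n^{-2}\,\esp[\normi{X}^2\1{\normi{X}\le a_n\epsilon}]\;.
$$
Because $\normi{X}$ is regularly varying with index $\alpha\in(1,2)$, Karamata's theorem gives $\esp[\normi{X}^2\1{\normi{X}\le a_n\epsilon}]\sim\frac{\alpha}{2-\alpha}(a_n\epsilon)^2\,\pr(\normi{X}>a_n\epsilon)$, while regular variation together with $n\,\pr(\normi{X}>a_n)\to1$ yields $n\,\pr(\normi{X}>a_n\epsilon)\to\epsilon^{-\alpha}$. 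Hence $\limsup_{n}\var(U_{\epsilon,n}(t))\le\frac{\alpha}{2-\alpha}\epsilon^{2-\alpha}$ uniformly in $t$, and this tends to $0$ as $\epsilon\downarrow0$ since $2-\alpha>0$, which is~(\ref{eq:var-negl-cond}).

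The core of the argument is~(\ref{eq:tightness<epsilonbis-generic}), which I would derive from the moment criterion of \citet[Theorem~15.6]{billingsley:1968}. The key intermediate bound is, for $s<t<u$,
$$
\esp[|U_{\epsilon,n}(t)-U_{\epsilon,n}(s)|^p\,|U_{\epsilon,n}(u)-U_{\epsilon,n}(t)|^p]\le C\,\{F_n(u)-F_n(s)\}^{2\gamma}\;,
$$
with $C$ depending only on $p$. Writing both increments as sums over the common index $i$ of independent centered variables and applying Burkholder's inequality — the same computation as in the proof of \autoref{lem:spectral-measure}, with the random coefficients $\Gamma_i^{-1/\alpha}$ replaced by the deterministic weight $a_n^{-1}$ — yields a \emph{diagonal} term bounded by a constant times $n\,a_n^{-2p}\,\esp[|\bar X_{\epsilon,n}(s,t)\,\bar X_{\epsilon,n}(t,u)|^p]$ and an \emph{off-diagonal} term bounded by a constant times $(n\,a_n^{-p})^2\,\esp[|\bar X_{\epsilon,n}(s,t)|^p]\,\esp[|\bar X_{\epsilon,n}(t,u)|^p]$. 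By~(\ref{eq:burk2}) the first is at most $\{F_n(u)-F_n(s)\}^{2\gamma}$, and by~(\ref{eq:burk1}) and the monotonicity of $F_n$ the second is at most $\{F_n(t)-F_n(s)\}^{\gamma}\{F_n(u)-F_n(t)\}^{\gamma}\le\{F_n(u)-F_n(s)\}^{2\gamma}$, both uniformly in $\epsilon\in(0,1]$. Since $2\gamma>1$, \citet[Theorem~15.6]{billingsley:1968} then provides a tail bound for $w^{\prime\prime}(U_{\epsilon,n},\delta)$ whose constant depends only on $p$ and $\gamma$ and which involves $F_n$ only through its oscillation $\sup_{|u-s|\le2\delta}\{F_n(u)-F_n(s)\}$ (to a positive power) and the total increment $F_n(b)-F_n(a)$. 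As $F_n\to F$ uniformly with $F$ continuous, taking $\limsup_{n}$ and then $\delta\downarrow0$ drives this oscillation to $0$, which gives~(\ref{eq:tightness<epsilonbis-generic}) and hence the conclusion via \autoref{lem:negl-via-w''}.

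The main obstacle is the product-moment inequality in the previous paragraph. The two increments $\bar X_{\epsilon,n}(s,t)$ and $\bar X_{\epsilon,n}(t,u)$ of a single path are dependent, so the expectation does not factor, and the correct diagonal/off-diagonal split must be extracted from a \emph{bilinear} use of Burkholder's inequality; one cannot simply bound $|\cdot|^p|\cdot|^p$ by $2p$-th moments of individual summands, since those moments are not controlled by~(\ref{eq:burk1})--(\ref{eq:burk2}). Producing exactly the two terms matched to~(\ref{eq:burk2}) and~(\ref{eq:burk1}), with constants independent of both $n$ and $\epsilon$, is the delicate point of the proof.
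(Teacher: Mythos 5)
Your proposal is correct and follows essentially the same route as the paper: reduction via \autoref{lem:negl-via-w''}, the Karamata/regular-variation bound $\limsup_n\var(U_{\epsilon,n}(t))=O(\epsilon^{2-\alpha})$ for~(\ref{eq:var-negl-cond}), and a bilinear Burkholder bound splitting into the diagonal term matched to~(\ref{eq:burk2}) and the off-diagonal term matched to~(\ref{eq:burk1}), followed by Markov's inequality and the chaining argument of \citet[Theorem~15.6]{billingsley:1968} with the modulus of continuity of $F$ and $\gamma>1/2$ closing the proof. The "delicate point" you flag at the end is handled in the paper exactly as you describe, by citing Burkholder's inequality in the form of \citet[Theorem~2.10]{hall:heyde:1980}.
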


\begin{proof}
  We apply \autoref{lem:negl-via-w''}. By the regular variation of $\normi{X}$,
  it holds that
  \begin{align*}
    \limsup_{n\to\infty} \var(U_{\epsilon,n}(t)) \leq \limsup_{n\to\infty}
    n a_n^{-2} \esp[\normi{X}^2 \1{\{\normi{X} \leq a_n \epsilon\}}] =
    O(\epsilon^{2-\alpha}) \; ,
  \end{align*}
which yields~(\ref{eq:var-negl-cond}).

By Burkholder's inequality, \cite[Theorem~2.10]{hall:heyde:1980}, and
conditions~(\ref{eq:burk1}) and~(\ref{eq:burk2}), we have, for some constant $C>0$, for any
$\epsilon\in(0,1]$, 
  \begin{align*} 
    \esp[|U_{\epsilon,n}(s,t)|^{p} |U_{\epsilon,n}(t,u)|^{p}] & \leq C
    \,
    n a_n^{-2p} \, \esp[|\bar{X}_{\epsilon,n}(s,t)|^p|\bar{X}_{\epsilon,n}(t,u)|^p] \\
    & \phantom{ \leq } + C \, n^2 a_n^{-2p} \, \esp[|\bar{X}_{\epsilon,n}(s,t)|^p] \,
    \esp[|\bar{X}_{\epsilon,n}(t,u)|^p] \\
    &  \leq  2\,C \, \{F_n(u) - F_n(s)\}^{2\gamma} \; .
  \end{align*}
By Markov's inequality, this yields 
  \begin{align*} 
    \pr\left(|U_{\epsilon,n}(s,t)|>\lambda \; ,
      |U_{\epsilon,n}(t,u)| > \lambda \right) 
    & \leq C\,\lambda^{-2p}\, \{F_n(u) - F_n(s)\}^{2\gamma} \; .
  \end{align*}
  Arguing as in the proof of \citet[Theorem~15.6]{billingsley:1968} (see also
  the proof of the Theorem of~\citet[p.~335]{genest:ghoudi:remillard:1995}), we
  obtain, for $\delta>0$ and $\eta>0$
  \begin{align*}
    \pr(w^{\prime\prime}(\sneps,\delta)>\eta) \leq C'\, \eta^{-2p}
    \sum_{i=1}^K \{F_n(t_i)-F_n(t_{i-1})\}^{2\gamma} \; ,
  \end{align*}
  where $C'$ is a constant which depends neither on $\epsilon\in(0,1]$ nor on
  $\delta>0$, $K>1/(2\delta)$ and $t_1<\cdots<t_K$ are such that
  $|t_i-t_{i-1}|\leq \delta$ and $I \subset \cup_{i=1}^n [t_{i-1},t_i]$.  Thus
  \begin{align*}
    \sup_{0<\epsilon\leq1} \limsup_{n\to\infty}
    \pr(w^{\prime\prime}(\sneps,\delta)>\eta) \leq C''\, \eta^{-2p}
    \{w(F,\delta)\}^{2\gamma-1} \; ,
  \end{align*}
  where $C''$ does not depend on $\delta$.  Since $F$ is continuous and
  $\gamma>1/2$, this yields~(\ref{eq:tightness<epsilonbis-generic}).
\end{proof}

\subsection{Proof of \autoref{theo:conv-stable-D-array}}
\label{sec:proof-main-theo}
We apply \autoref{theo:francois} to the point processes $N_n$ and $N$
defined in \autoref{sec:reg-var-pp} and the measure $\mu_\alpha$ in lieu of
$\mu$.  By \autoref{prop:RVpoitmeasureconv-array}, we have that $N_n$ converges
weakly to $N$ in $\mplusxsil$, i.e. Condition~\ref{item:thm-ppp-hyp-conv} holds.
Condition~\ref{item:no-fix} and the definition of $\mu_\alpha$ imply~\ref{item:thm-ppp-hyp-disc-set}.
Condition~(\ref{eq:tightness<epsilon-array}) corresponds to Condition~\ref{item:thm-ppp-hyp-negl}.
Condition~\ref{item:moment-levy} is a consequence of the definition of
$\mu_\alpha$:
\begin{align*}
  \int_{(0,1]} y^2\;\mu_\alpha(\rmd{y},\sil) = \int_0^1 \alpha y^{2-\alpha-1} \rmd t
  = \frac\alpha{2-\alpha} \; .
\end{align*}

For $0<\epsilon<x$, define $Y_{n} = \int_{(\epsilon,\infty)} y\;N_n(\rmd
y,\sil)$ and $Y = \int_{(\epsilon,\infty)} y\;N(\rmd y,\sil)$. 
The weak convergence of $N_n$ to $N$ implies that of $N_n(\cdot\times\sil)$ to
$N(\cdot\times\sil)$. In turn, by continuity of the map $m\mapsto
\int_\epsilon^\infty y \;m(\rmd y)$ on the set of point measures on
$(0,\infty]$ without mass on $\{\epsilon,\infty\}$, the weak convergence of
$N_n(\cdot\times\sil)$ to $N(\cdot\times\sil)$ implies that of $Y_{n}$ to $Y$.

Let $\sigma_n$ be the measure on $(0,\infty]$ defined by $ \sigma_n((x,\infty])
= \sum_{i=1}^{m_n} \pr(\normil{X_{i,n}} >x)$. We have
\begin{align*}
\esp[Y_{n}]=\int_{(\epsilon,x]} y \, \sigma_n(\rmd y) + \sum_{i=1}^{m_n}
  \esp\left[\normi{X_{i,n}}\1{\normil{X_{i,n}}>x}\right]\;.
\end{align*}
Condition~(\ref{eq:reg-var-metric-arrays-lim}) implies that $\sigma_n$ converges
vaguely on $(0,\infty]$ to the measure with density $\alpha x^{-\alpha-1}$ with
respect to Lebesgue's measure. Thus 
$$
\lim_{n\to\infty}\int_{(\epsilon,x]} y \, \sigma_n(\rmd y)= \int_{(\epsilon,x]}
y \, \alpha y^{-\alpha-1}\rmd y \;.
$$
The last two displays and Condition~(\ref{eq:reg-var-metric-arrays-unif-integ})
imply that $\esp[Y_n]$ converges to $\esp[Y]$. and $\esp[Y]<\infty$. Since
$Y_n,Y$ are non-negative random variables, this implies the uniform
integrability of $\{Y_n\}$. We have proved~\ref{item:thm-ppp-hyp-unif-int} for
$\epsilon$ such that $\mu(\{\epsilon\}\times\sil)=0$. By monotony with respect
to $\epsilon$, this actually holds for any $\epsilon>0$.

Finally, the representation~(\ref{eq:repres-mesure}) follows
from~\citet[Theorem~3.12.2]{samorodnitsky:taqqu:1994}.

\section{Applications}
\label{sec:applis}

The usual way to prove the weak convergence of a sum of independent regularly
varying functions in $\di$ is to establish the convergence of finite-dimensional
distributions (which follows from the finite-dimensional regular variation) and
a tightness criterion.  We consider here another approach, based on functional
regular variation.  It has been proved in \autoref{sec:reg-var-pp} that
functional regular variation implies the convergence of the point process of
(functional) exceedances.  Thus, in order to apply \autoref{theo:conv-stable-D}
or \autoref{theo:conv-stable-D-array}, an asymptotic negligibility condition
(such as (\ref{eq:tightness<epsilon}) or~(\ref{eq:tightness<epsilon-array}),
respectively) must be proved.  Since the functional regular variation condition
takes care of the ``big jumps'', the negligibility condition concerns only the
``small jumps'', i.e.~we must only prove the tightness of sum of truncated
terms. This can be conveniently done by computing moments of any order
$p>\alpha$, even though they are infinite for the original series.  We provide
in this section some examples where this new approach can be fully carried out.

\subsection{Invariance principle}
\label{subsec:invariance}
We start by proving that the classical invariance principle is a particular case
of \autoref{theo:francois}. Let $\{z_i\}$ be a sequence of i.i.d.~random
variables in the domain of attraction of an $\alpha$-stable law, with
$\alpha\in(1,2)$. Let $a_n$ be the $1/n$-th quantile of the distribution of
$|z_1|$ and define the partial sum process $S_n$ by
\begin{align*}
  S_n(t) = a_n^{-1} \sum_{k=1}^{[nt]} (z_k-\esp[z_1]) \; .
\end{align*}
For $u\in[0,1]$, denote by $w_u$ the indicator function of the interval $[u,1]$
i.e. $w_u(t)=\1{[u,1]}(t)$ and define $X_{k,n} = a_n^{-1} z_kw_{k/n}$. Then we
can write $S_n = \sum_{k=1}^{n} (X_{k,n}-\esp[X_{k,n}])$. We will apply
\autoref{theo:conv-stable-D-array} to prove the convergence of $S_n$ to a stable
process in $\mathcal{D}(I)$ with $I=[0,1]$.  Note that
$\normi{X_{k,n}}=z_k/a_n$. Thus, by \autoref{rem:iid-case}, we only
need to prove that~(\ref{eq:reg-var-metric-arrays-lim}) holds with a measure
$\nu$ that satisfies Condition \ref{item:no-fix} and the negligibility
condition~(\ref{eq:tightness<epsilon-array}). Let~$\nu$ be the probability
measure defined on $\si$ by $\nu(\cdot ) % = \pr(\1{[U,1]} \in \cdot)
= \int_0^1 \pointmass_{w_u}(\cdot) \, \rmd u$,
% where $U$ is a random variable uniformly distributed on $[0,1]$.  
$\mu_\alpha$ be defined on $(0,\infty]\times\si$ by $\mu_\alpha((r,\infty]
\times \cdot) = r^{-\alpha} \nu(\cdot)$ and $\mu_n$ be the measure in the
left-hand side of~(\ref{eq:reg-var-metric-arrays-lim}). Since
$\normi{X_{k,n}}=z_k/a_n$, and the random variables $z_k$ are i.i.d. and
$w_{k/n}$ are deterministic, we have, for all $r>0$ and Borel subsets $A$ of
$\si$,
\begin{align*}
  \mu_n((r,\infty]\times A) = \left(n \pr(z_1>a_n r)\right) \times \left(\frac1n
    \sum_{k=1}^n\1{\{w_{\frac k n} \in A\}} \right) \; .
\end{align*}
By the regular variation of $z_1$, the first term of this product converges to
$r^{-\alpha}$.  The second term of this product can be written as $P_n \circ
\phi^{-1}(A)$, where $P_n = n^{-1} \sum_{k=1}^n \pointmass_{k/n}$ is seen as a
probability measure on the Borel sets of $[0,1]$ and $\phi:[0,1]\to \di$ is
defined by $\phi(u)=w_u$. Since $\phi$ is continuous (with $\di$ endowed by
$J_1$) and $P_n$ converges weakly to the Lebesgue measure on $[0,1]$, denoted by
$\lebesgue$, by the continuous mapping theorem, we have that $P_n\circ
\phi^{-1}$ converges weakly to $\lebesgue\circ\phi^{-1}=\nu$.  This proves
that~(\ref{eq:reg-var-metric-arrays-lim}) holds.

To prove Condition~(\ref{eq:tightness<epsilon-array}), note that
\begin{align*}
  \normi{S_n^{<\epsilon}} = a_n^{-1} \max_{1\leq k \leq n}\left| \sum_{i=1}^k
    \left(z_k \1{\{|z_k|\leq a_n \epsilon\}}-\esp\left[z_k \1{\{|z_k|\leq a_n
          \epsilon\}}\right]\right)\right| \; ,
\end{align*}
where $S_n^{<\epsilon}$ denotes the sum appearing in the left-hand
side of~(\ref{eq:tightness<epsilon-array}).
By Doob's inequality \cite[Theorem~2.2]{hall:heyde:1980},
we obtain
\begin{align*}
  \esp[ \|S_n^{<\epsilon}\|_\infty^2] &\leq 2 \;\var\left(a_n^{-1} \sum_{i=1}^n
    z_k \1{\{|z_k|\leq a_n \epsilon\}}\right) \leq
  na_n^{-2}\esp[z_1^2\1{\{|z_k|\leq a_n \epsilon\}}] = O(\epsilon^{2-\alpha})
  \; ,
\end{align*}
by regular variation of $z_1$.  This bound and Markov's inequality
yield~(\ref{eq:tightness<epsilon-array}).

\subsection{Stable processes}
\label{subsec:stable}
Applying \autoref{coro:burkholder}, we obtain a criterion for the convergence of
partial sums of a sequence of i.i.d. processes that admit the representation
$RW$, where $R$ is a Pareto random variable and $W\in\si$. This type of process
is sometimes referred to as (generalized) Pareto processes. See
\cite{ferreira:dehaan:zhou:2012}.  

\begin{proposition} 
  \label{lemma:as-convergence-serie}
  Let $\{R,R_i\}$ be a sequence of i.i.d. real valued random variables in the
  domain of attraction of an $\alpha$-stable law, with $1 < \alpha <2$. Let
  $\{W,W_i,i\geq1\}$ be an i.i.d. sequence in $\si$ with distribution $\nu$
  satisfying the assumptions of \autoref{lem:spectral-measure}, and independent
  of the sequence $\{R_i\}$.  Then, defining $a_n$ as an increasing sequence
  such that by $\pr(R>a_n)\sim 1/n$, $a_n^{-1} \sum_{i=1}^n \{R_iW_i-\esp[R]
  \esp[W]\}$ converges weakly in $\di$ to a stable process $Z$ which admits the
  representation~(\ref{eq:repres-mesure}).
\end{proposition}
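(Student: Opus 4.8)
The plan is to apply \autoref{theo:conv-stable-D} to the i.i.d. sequence $X_i:=R_iW_i$. In the Pareto setting $R>0$, so that $\normi{X_i}=R_i\normi{W_i}=R_i$ and $X_i/\normi{X_i}=W_i$; by independence $\esp[X_i]=\esp[R]\esp[W]$, so the centering in the statement is exactly the centering $a_n^{-1}\sum_{i=1}^n\{X_i-\esp[X]\}$ of \autoref{theo:conv-stable-D}, and the limit $\aleph$ furnished by that theorem, carrying the representation~(\ref{eq:repres-mesure}), is the claimed process $Z$. It thus remains to verify the three hypotheses of \autoref{theo:conv-stable-D}: the regular variation~(\ref{eq:reg-var-metric}) with spectral measure $\nu$, the no-fixed-jump Condition~\ref{item:no-fix}, and the negligibility Condition~\ref{item:tightness<epsilon}.

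The first two are immediate. Since $W\sim\nu$ is independent of $R$ and $X/\normi{X}=W$, for every Borel $A\subset\si$ with $\nu(\partial A)=0$ we get $n\,\pr(\normi{X}>a_nx,\,X/\normi{X}\in A)=\big(n\,\pr(R>a_nx)\big)\,\nu(A)\to x^{-\alpha}\nu(A)$, using the regular variation of the right tail of $R$ and $\pr(R>a_n)\sim1/n$; this is~(\ref{eq:reg-var-metric}). Condition~\ref{item:no-fix}, namely $\pr(t\in\disc(W))=0$ for all $t\in I$, follows from the hypotheses of \autoref{lem:spectral-measure}: by the remark following that lemma, continuity of $\esp[W]$ together with~(\ref{eq:tightness-spectral-1}) forces $W$ to have no fixed jump.

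The core of the argument is the negligibility Condition~\ref{item:tightness<epsilon}. Writing $V_i=R_i\1{\{R_i\le a_n\epsilon\}}$, $\bar W_i=W_i-\esp[W]$ and using $\esp[X\1{\{\normi{X}\le a_n\epsilon\}}]=\esp[V]\esp[W]$, the vector inside~(\ref{eq:tightness<epsilon}) is $U_{\epsilon,n}=a_n^{-1}\sum_{i=1}^n\{V_iW_i-\esp[V]\esp[W]\}$, which I would split as
\begin{equation*}
U_{\epsilon,n}=a_n^{-1}\sum_{i=1}^n V_i\bar W_i+\esp[W]\,\Big(a_n^{-1}\sum_{i=1}^n\{V_i-\esp[V]\}\Big)=:A_{\epsilon,n}+\esp[W]\,B_{\epsilon,n}\;.
\end{equation*}
Since $\normi{\esp[W]}\le\esp[\normi{W}]=1$, we have $\normi{U_{\epsilon,n}}\le\normi{A_{\epsilon,n}}+|B_{\epsilon,n}|$, so it suffices to prove that the process $A_{\epsilon,n}$ and the scalar $B_{\epsilon,n}$ are separately negligible. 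The scalar part is easy: $B_{\epsilon,n}$ is a normalized sum of centered i.i.d. variables with $\var(B_{\epsilon,n})=n a_n^{-2}\var(V)=O(\epsilon^{2-\alpha})$ by Karamata's theorem, hence negligible by Chebyshev's inequality. For $A_{\epsilon,n}$ I would invoke \autoref{lem:negl-via-w''}, reusing the Burkholder computation in the proof of \autoref{coro:burkholder} but applied to the centered i.i.d. increments $V\,\bar W(s,t)$, which, crucially, carry no increment of $\esp[W]$. Independence of $R$ and $W$ factorizes every moment: the pointwise bound $\var(A_{\epsilon,n}(t))=n a_n^{-2}\esp[V^2]\var(W(t))=O(\epsilon^{2-\alpha})$ gives~(\ref{eq:var-negl-cond}), while the Burkholder product bound, combined with the elementary orders $n a_n^{-p}\esp[V^p]=O(\epsilon^{p-\alpha})$ and $n a_n^{-2p}\esp[V^{2p}]=O(\epsilon^{2p-\alpha})$ (both bounded on $\epsilon\in(0,1]$ since $p>\alpha$) and the tightness bounds~(\ref{eq:tightness-spectral-1})--(\ref{eq:tightness-spectral-2}), yields $\esp[|A_{\epsilon,n}(s,t)|^p|A_{\epsilon,n}(t,u)|^p]\le C\{F(u)-F(s)\}^{2\gamma}$ uniformly in $\epsilon\le1$ and $n$. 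As $F$ is continuous and $\gamma>1/2$, \citet[Theorem~15.6]{billingsley:1968} gives~(\ref{eq:tightness<epsilonbis-generic}), whence $\lim_{\epsilon\to0}\limsup_n\pr(\normi{A_{\epsilon,n}}>\eta)=0$ by \autoref{lem:negl-via-w''}.

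The step I expect to be the main obstacle is precisely this treatment of the centering. Applying \autoref{coro:burkholder} verbatim to $X_i=R_iW_i$ fails, because the centered truncated increment equals $V\,\bar W(s,t)+(V-\esp[V])\{\esp[W](t)-\esp[W](s)\}$, and controlling the second term in the form~(\ref{eq:burk1})--(\ref{eq:burk2}) would require a quantitative modulus of continuity for $\esp[W]$ that the hypotheses (mere continuity of $\esp[W]$) do not provide. The decomposition $U_{\epsilon,n}=A_{\epsilon,n}+\esp[W]B_{\epsilon,n}$ isolates this term into the scalar $B_{\epsilon,n}$, where only a one-dimensional variance estimate is needed. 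Once~(\ref{eq:tightness<epsilon}) is secured, \autoref{theo:conv-stable-D} delivers the weak convergence of $a_n^{-1}\sum_{i=1}^n\{R_iW_i-\esp[R]\esp[W]\}$ in $(\di,J_1)$ to the $\alpha$-stable process with the representation~(\ref{eq:repres-mesure}), which is the announced process $Z$.
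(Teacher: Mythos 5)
Your proof is correct and follows essentially the same route as the paper's: the same reduction to \autoref{theo:conv-stable-D}, the same decomposition of the truncated centered sum into $\sum_i r_{n,i}\{W_i-\esp[W]\}$ plus $\esp[W]\sum_i\{r_{n,i}-\esp[r_{n,i}]\}$ (the paper's~(\ref{eq:conv-serie-centrage})), with Chebyshev for the scalar part and the Burkholder/Billingsley tightness argument of \autoref{coro:burkholder} for the process part. Your observation that \autoref{coro:burkholder} cannot be applied verbatim to $X=RW$ because of the uncontrolled $\esp[W]$-increment is precisely the point the paper's decomposition is designed to address, so the two arguments coincide.
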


\begin{remark}
  By \autoref{lem:series-rep}, the stable process $Z$ also admits the series
  representation~(\ref{eq:series-rep}), which is almost surely convergent
  in~$\di$ and by \autoref{lem:spectral-measure} it is regularly varying in the
  sense of~(\ref{eq:reg-var-metric}), with spectral measure $\nu$. As mentioned
  in the proof of \autoref{lem:spectral-measure}, the proof we give here of the
  existence of a version of $Z$ in $\di$ is different from the proof of
  \cite{davydov:dombry:2012} or \cite{basse:rosinski:2012}.
\end{remark}

\begin{proof}[Proof of \autoref{lemma:as-convergence-serie}]
  We apply \autoref{theo:conv-stable-D} to $X_i = R_iW_i$. The regular variation
  condition~(\ref{eq:reg-var-metric}) holds trivially since $\normi{X}=R$
  is independent of $X/\normi{X}=W$.
  Condition~(\ref{eq:tightness-spectral-2}) implies that $W$ has no fixed jump,
  i.e. Condition~\ref{item:no-fix} holds.  Thus we only need to prove that the
  negligibility condition~\ref{item:tightness<epsilon} holds.  Write $S_n^{<\epsilon} =
  a_n^{-1} \sum_{i=1}^n \{R_{i}\1{\{R_i\leq \epsilon a_n\}}W_i -
  \esp[R\1{\{R\leq a_n \epsilon\}}] \esp[W]\}$ and $r_{n,i} =
  a_n^{-1}R_i\1{\{R_i\leq a_n\epsilon\}}$. Then,
  \begin{align}
    \label{eq:conv-serie-centrage}
    S_n^{<\epsilon} = \sum_{i=1}^n r_{n,i} \{W_i- \esp[W]\} + \esp[W]
    \sum_{i=1}^n \{r_{n,i}-\esp[r_{n,i}] \} \; .
  \end{align}
  Since $\normi{\esp[W]}\leq1$, the second term's infinite norm on $I$ can be
  bounded using the Bienaymé-Chebyshev inequality and the regular variation of
  $R$ which implies, for any $p>\alpha$, as $n\to\infty$,
    \begin{equation}
      \label{eq:rv-bound-conv-serie}
      \esp[|r_{n,i}|^p]\sim \frac\alpha{p-\alpha}\;\epsilon^{p-\alpha}\;n^{-1}\;.
    \end{equation}
    Hence we only need to deal with the first term in the right-hand side
    of~(\ref{eq:conv-serie-centrage}), which is hereafter denoted by $\tilde
    S_n^{<\epsilon}$. Since $R$ is independent of $W$,
    Conditions~(\ref{eq:burk1}) and~(\ref{eq:burk2}) are straightforward
    consequences of~(\ref{eq:tightness-spectral-1}) and~(\ref{eq:tightness-spectral-2}). Thus
    Condition~\ref{item:tightness<epsilon} holds by
    \autoref{coro:burkholder}. The last statement follows from
    \autoref{lem:spectral-measure}
\end{proof}

\subsection{Renewal reward process}
\label{subsec:renewal}

Consider a renewal process $N$ with i.i.d. interarrivals $\{Y_i,i \geq1\}$ with
common distribution function $F$, in the domain of attraction of a
stable law with index $\alpha\in(1,2)$. Let $a_n$ be a norming sequence defined
by $a_n = F^\leftarrow(1-1/n)$. Then, for all $x>0$,
\begin{align*}
  \lim_{n\to\infty} n \bar F(a_nx) = x^{-\alpha} \; .
\end{align*}
Consider a sequence of rewards $\{W_i,i\geq1\}$ with distribution function $G$ and
define the renewal reward process $R$ by
\begin{align*}
  R(t) = W_{N(t)} \; .
\end{align*}
let $\phi$ be a measurable function and define $A_T(\phi)$ by
\begin{align*}
  A_T(\phi) = \int_0^T \phi(R(s)) \, \mathrm d s \; .
\end{align*}
We are concerned with the functional weak convergence of $A_T$. We moreover
assume that the sequence $\{(Y,W),\,(Y_i,W_i),i \geq1\}$ is i.i.d. and that
 $Y$ and $W$ are asymptotically independent in the sense of
\cite{maulik:resnick:rootzen:2002}, i.e.
\begin{align} \label{eq:asympt-indep}
  \lim_{n\to\infty} n \pr\left(\left(\frac Y{a_n},W\right) \in \cdot\right)
  \stackrel {v} \to \mu_\alpha \otimes G^*
\end{align}
on $]0,\infty] \times \mathbb R$, where $G^*$ is a probability measure on
$\mathbb R$. This assumptions is obviously satisfied when $Y$ and $W$ are
independent, with $G^*=G$ in that case. 
 
When $Y$ and $W$ are independent and $\esp[|\phi(W)|^\alpha]<\infty$, it has
been proved by \cite{taqqu:levy:1986} that $a_T^{-1} \{A_T(\phi) -
\esp[A_T(\phi)]\}$ converges weakly to a stable law. 

% We also assume that $N$ is stationary, that is,
% , defining $S_n =
% Y_0+\cdots+Y_n$, $S_{-1}=0$, 
% \begin{align*}
%   N(t) = k \iff S_{k-1} \leq t < S_k \; ,
% \end{align*}
% where
%we choose the distribution of the first jump instant $Y_0$ as the delay distribution
Define $\lambda = (\esp[Y])^{-1}$ and 
\begin{align*}
  F_0(w) = \lambda \esp[Y\1{\{W \leq w\}}] \; .
\end{align*}
% \begin{align*}
%   % \pr(Y_0 \leq y) 
%   K(x) = \lambda \int_0^x \bar F(y) \, \mathrm d y \; .
% \end{align*}
Then $F_0$ is the steady state marginal distribution of the renewal reward process and
$\lim_{t\to\infty} \pr(R(t)\leq w) = F_0(w)$.
For $w\in\mathbb{R}$, consider the functions $\1{\{\cdot\leq w\}}$, which
yields the usual one-dimensional empirical process:
\begin{align*}
  E_T(w) = a_T^{-1} \int_0^T \{\1{\{R(s)\leq w\}} - F_0(w)\} \, \mathrm{d} s  \;  .
\end{align*}
% Note that the marginal distribution $F_0$ of $R(0)$ is given by
% \begin{align*}
%   F_0(t) = \pr(R(0) \leq t) = \frac{\esp[Y\1{\{W \leq t\}}]}{\esp[Y]}= \lambda
%   \esp[Y\1{\{W \leq t\}}] \; .
% \end{align*}

\begin{theorem}
  Assume that~(\ref{eq:asympt-indep}) holds with $G^*$ continuous. The sequence
  of processes $E_T$ converges weakly in $\mathcal{D}(\mathbb{R})$ endowed with
  the $J_1$ topology as $T$ tends to infinity to the process~$E^*$ defined~by
  \begin{align*}
    E^*(w) = \int_{-\infty}^\infty \{\1{\{x\leq w\}} - F_0(w)\} M(\mathrm{d} x) \; ,
  \end{align*}
  where $M$ is a totally skewed to the right stable random measure with
  control measure $G^*$, i.e.
  \begin{align*}
    % \label{eq:log-laplace-stable}
     \log \esp \left[ \mathrm e^{\mathrm i t \int_{-\infty}^\infty \phi(w)
         M(\mathrm d w)} \right] = - |t|^\alpha \lambda c_\alpha
     \esp[|\phi(W^*)|^\alpha] \{ 1 + \mathrm i \, \mathrm{sign}(t)\beta(\phi)
     \tan(\pi\alpha/2)\} \; ,
  \end{align*}
  where $W^*$ is a random variable with distribution $G^*$,
  $c_\alpha^\alpha=\Gamma(1-\alpha)\cos(\pi\alpha/2)$ and $\beta(\phi) =
  \esp[\phi_+^\alpha(W^*)]/\esp[|\phi(W^*)|^\alpha]$.
  
\end{theorem}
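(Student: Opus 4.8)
The plan is to realize $a_T E_T$ as a centered sum of i.i.d. random elements of $\mathcal{D}(\mathbb{R})$ indexed by the renewal epochs, and then to invoke \autoref{theo:conv-stable-D-array}. Writing $T_k=\sum_{j=1}^k Y_j$, $N(T)=\max\{k:T_k\le T\}$ and $A_T=T-T_{N(T)}$ for the age at time $T$, the reward is constant on each cycle, which together with $F_0(w)=\lambda\,\esp[Y\1{\{W\le w\}}]$ and the elementary relation $N(T)-\lambda T=-\lambda\sum_{i=1}^{N(T)}(Y_i-\esp[Y])-\lambda A_T$ yields, up to the standard cycle bookkeeping, the identity
\begin{align*}
  a_T E_T(w) = a_T^{-1}\sum_{i=1}^{N(T)} Y_i\{\1{\{W_i\le w\}}-F_0(w)\} + a_T^{-1} A_T\{\1{\{W_{N(T)+1}\le w\}}-F_0(w)\}\;.
\end{align*}
The point of this rewriting is that the fluctuation of the renewal counting process is exactly what turns the natural centering into the centering by $F_0$, so that the summands $\tilde X_i(w)=Y_i\{\1{\{W_i\le w\}}-F_0(w)\}$ are i.i.d. with $\esp[\tilde X_i]\equiv0$. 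First I would dispose of the boundary term: since $\normi{\1{\{u\le\cdot\}}-F_0}\le1$, it is bounded in sup-norm by $A_T/a_T$, and the inspection-paradox estimate $\pr(A_T>a_T x)\sim \lambda x^{1-\alpha}a_T/\{(\alpha-1)T\}\to0$ (using $a_T/T\to0$) shows $A_T/a_T\to0$ in probability, so this term is negligible in $(\mathcal{D}(\mathbb{R}),J_1)$.

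The crux is the random index. I would replace $N(T)$ by the deterministic $n_T=\lfloor\lambda T\rfloor$ and show that $a_T^{-1}\normi{\sum_{i=(n_T\wedge N(T))+1}^{n_T\vee N(T)}\tilde X_i}\to0$ in probability. Here $|N(T)-n_T|$ is of order $a_T$ in probability, hence $o_{\mathbb P}(T)$, so that with probability tending to one no large jump $\{Y_i>a_T\epsilon\}$ falls in this window; conditionally on that event the window sum is a centered sum of $O_{\mathbb P}(a_T)$ truncated terms whose variance is of order $a_T\cdot a_T^2\bar F(a_T)\asymp a_T^3/T=o(a_T^2)$. Controlling this uniformly in $w$ rather than pointwise is the delicate point, and I expect it to be the main obstacle: the dependence between $N(T)$ and the $Y_i$'s forbids a naive Anscombe-type argument, so the estimate must be run through the big-jump/small-jump split above together with a maximal inequality over $w$ for the truncated increments. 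Once this is settled, Slutsky's lemma reduces the problem to the weak convergence of $a_T^{-1}\sum_{i=1}^{n_T}\tilde X_i$.

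It then remains to apply \autoref{theo:conv-stable-D-array} to the array $X_{i,T}=a_T^{-1}\tilde X_i$, $1\le i\le n_T$. The regular variation~(\ref{eq:reg-var-metric-arrays-lim}) follows from the asymptotic independence~(\ref{eq:asympt-indep}) by pushing the limiting measure $\mu_\alpha\otimes G^*$ forward through the continuous map $(y,w)\mapsto y\{\1{\{w\le\cdot\}}-F_0\}$; since there are $n_T\sim\lambda T$ epochs while $a_T$ is calibrated to $T$, a factor $\lambda$ appears, so the spectral measure $\nu$ is the law of the (normalized) function $\1{\{W^*\le\cdot\}}-F_0$ with $W^*\sim G^*$ and the radial intensity is $\lambda\alpha y^{-\alpha-1}$. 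Conditions~(\ref{eq:reg-var-metric-arrays-sup}) and~(\ref{eq:reg-var-metric-arrays-unif-integ}) are automatic by \autoref{rem:iid-case} and Karamata's theorem, because $\normi{\tilde X}\le Y$ is regularly varying with index $\alpha\in(1,2)$. Condition~\ref{item:no-fix} holds since the only discontinuities of a spectral function $\1{\{W^*\le\cdot\}}-F_0$ are at $W^*$ and at the jumps of $F_0$: continuity of $G^*$ gives $\pr(W^*=w)=0$ for each $w$, and continuity of the drift $F_0$ excludes deterministic jumps; the latter holds whenever the reward distribution has no atom, in particular in the independent case where $F_0=G$ is continuous.

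Finally the negligibility~(\ref{eq:tightness<epsilon-array}) is obtained from \autoref{coro:burkholder}: for the increments $\tilde X(s,t)=Y\{\1{\{s<W\le t\}}-(F_0(t)-F_0(s))\}$ the moment bounds~(\ref{eq:burk1}) and~(\ref{eq:burk2}) hold with a continuous increasing control function built from $F_0$ and the truncated tail of $Y$, the product bound~(\ref{eq:burk2}) being straightforward since the indicator supports over disjoint $w$-intervals are disjoint. \autoref{theo:conv-stable-D-array} then delivers weak convergence in $(\mathcal{D}(\mathbb{R}),J_1)$ to the process with integral representation~(\ref{eq:repres-mesure}) for this $\nu$, which is precisely $E^*(w)=\int_{-\infty}^\infty\{\1{\{x\le w\}}-F_0(w)\}\,M(\rmd x)$ with $M$ totally skewed to the right and control measure $\lambda G^*$; reading off its characteristic function via \citet[Theorem~3.12.2]{samorodnitsky:taqqu:1994} produces the stated constant $c_\alpha$, the factor $\lambda$, and the skewness $\beta(\phi)$.
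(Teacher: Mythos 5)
Your main term $a_T^{-1}\sum_{i=1}^{N(T)}Y_i\{\1{\{W_i\le w\}}-F_0(w)\}$ is algebraically the same as the paper's: the paper splits it as $a_T^{-1}\sum_{i\le N(T)}\{Y_i\1{\{W_i\le w\}}-\esp[Y\1{\{W\le w\}}]\}-F_0(w)\,a_T^{-1}\sum_{i\le N(T)}\{Y_i-\esp[Y]\}$, packages the two pieces as the bivariate array $Y_i[\1{\{G^*(W_i)\le t\}},1]'$ on $I=[0,1]$, applies \autoref{theo:conv-stable-D-array} with a deterministic index, and only afterwards substitutes the random index (via $N(T)/T\to\lambda$ a.s.) and the deterministic map $[z_1,z_2]\mapsto z_1\circ G^*-F_0\,z_2$. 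Your observation that the combined summand is already centered is correct and tidier, but the paper's detour buys two things you lose. First, the level variable is transported to the compact interval $[0,1]$ through $U_i=G^*(W_i)$; all the machinery you invoke (\autoref{lem:esp-cont}, \autoref{theo:francois}, the tightness criteria behind \autoref{coro:burkholder}) is built for a compact $I$, so applying \autoref{theo:conv-stable-D-array} directly in $\mathcal D(\mathbb R)$ is not licensed without this time change or a separate control of the tails $w\to\pm\infty$. Second, in the paper's version $F_0$ never enters the spectral functions (they are $\1{[U^*,1]}$ and $\1{[0,1]}$), whereas in yours the spectral functions are $\1{\{W^*\le\cdot\}}-F_0$, so both Condition~\ref{item:no-fix} and the continuity of the control function in \autoref{coro:burkholder} force $F_0$ to be continuous. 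That does not follow from the stated hypothesis that $G^*$ is continuous (an atom $w_0$ of $W$ with $\esp[Y\1{\{W=w_0\}}]>0$ need not survive in the limit measure $G^*$); you acknowledge this only in the independent case, so as written your verification of \ref{item:no-fix} and of~(\ref{eq:burk1})--(\ref{eq:burk2}) needs an extra assumption the theorem does not make.

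The second genuine gap is the one you flag yourself: the replacement of $N(T)$ by $n_T=\lfloor\lambda T\rfloor$. The heuristic (the window contains $O_{\pr}(a_T)$ indices whose truncated, centered sum has variance $o(a_T^2)$) is plausible but is not a proof: the window is determined by the very $Y_i$'s appearing in the sum, and the required bound is on the supremum over $w$ of the window sum, so one needs a maximal inequality simultaneously over the random index range and over the level $w$, run through the big-jump/small-jump split. Until that estimate is actually written down, the reduction to a deterministic index — and hence the appeal to Slutsky — is open. (The paper's own treatment of the random index is admittedly terse, but it at least evaluates an already-convergent process at $N(T)$ rather than introducing a new window estimate.) The remaining steps are sound: the pushforward of $\mu_\alpha\otimes G^*$ correctly produces the factor $\lambda$ and the spectral measure, the age term is $o_{\pr}(1)$ uniformly in $w$ by the Karamata computation you give, and the identification of the limit's characteristic function matches the statement.
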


\begin{remark}
  Equivalently, $E^*$ can be expressed as $E^* = Z \circ G^* - F_0 \cdot Z(1)$, where
  $Z$ is a totally skewed to the right Lévy $\alpha$-stable process with
  characteristic function $\esp \left[ \mathrm e^{\mathrm i t Z(1)} \right] =
  \exp\{- |t|^\alpha \lambda c_\alpha \{ 1 + \mathrm i \, \mathrm{sign}(t)
  \tan(\pi\alpha/2)\}$.  If moreover $Y$ and $W$ are independent, then the
  marginal distribution of $R(0)$ is $G$, $G^*=G$ and the limiting
  distribution can be expressed as $Z \circ G - G Z(1)$ and thus the law of
  $\sup_{w\in\mathbb R} E^*(w)$ is independent of $G$.
\end{remark}

\begin{proof}
Write
\begin{subequations}
  \begin{align}
    E_T(w) & = a_T^{-1} \sum_{i=0}^{N(T)} Y_i \1{\{W_i\leq w\}} + a_T^{-1}
    \{T-S_{N(T)}\}\1{\{W_N(T)\leq w\}} - a_T^{-1} \lambda T
    \esp[Y\1{\{W\leq w\}}]  \nonumber \\
    & = a_T^{-1} \sum_{i=0}^{N(T)} \{Y_i  \1{\{W_i\leq w\}}  -
    \esp[Y\1{\{W\leq w\}}\} % \label{eq:main-empiric} \\
    % & \phantom{ = } 
    -  a_T^{-1} \{ S_{N(T)}-\lambda^{-1} N(T)\} F_0(w)  \label{eq:main-empiric-2} \\
    & \phantom{ = } - a_T^{-1} \{S_{N(T)}-T\} \{ \1{\{W_{N(T)}\leq w\}})
    - \lambda\esp[Y\1{\{W\leq w\}}]\} \; . \label{eq:remainder}
  \end{align}
\end{subequations}
The term in~(\ref{eq:remainder}) is $o_P(1)$, uniformly with respect to $w \in
\mathbb{R}$.  Define $U_i=G^*(W_i)$ and $U=G^{*}(W)$. Define the sequence of
bivariate processes $S_n$ on $I=[0,1]$ by
\begin{align*}
  S_n(t) = a_n^{-1} \sum_{i=1}^{n} \left( Y_i [\1{\{U_i\leq t\}},1]' - \esp[Y[\1{\{U_i
      \leq t\}},1]'] \right) \; , 
\end{align*}
where $x'$ denotes the transpose of a vector $x\in\mathbb{R}^2$.  Then the term
in~(\ref{eq:main-empiric-2}) can be expressed as the scalar product
$[1,-F_0(w)]S_{N(T)}(G^*(w))$. Using that $N(T)/T$ converges almost surely to
$\lambda$, we can relate the asymptotic behavior of $S_{N(T)}$ to that of $S_n$.
The latter is obtained by applying \autoref{theo:conv-stable-D-array}. We prove
that the assumptions hold in two steps.
\begin{enumerate}[label={(\alph*)}]
\item Let $\phi$ be the mapping $(y,w)\mapsto y
  \left[\1{[G^*(w),1]},\,\1{[0,1]}\right]'$. This mapping is continuous from
  $(0,\infty)\times\mathbb{R}$ to $\di^2$. Thus,~(\ref{eq:asympt-indep})
  implies that the distribution of $Y
[\1{\{U\leq t\}},1]'$ is regularly varying with index $\alpha$ in $\dil$ with
$\ell=2$ and $\nu$ defined by
\begin{align*}
  \nu(\cdot) = \pr((\1{[U^*,1]},\1{[0,1]})' \in \cdot)
\end{align*}
where $U^*$ is uniformly distributed on
$[0,1]$. Conditions~(\ref{eq:reg-var-metric-arrays-lim}),~(\ref{eq:reg-var-metric-arrays-sup})
and~(\ref{eq:reg-var-metric-arrays-unif-integ}) then follow by
Remark~\ref{rem:iid-case}.

\item We must next prove the asymptotic negligibility
  condition~(\ref{eq:tightness<epsilon-array}).  It suffices to prove it for
  the first marginal $X=Y\1{[U,1]}$. For $\epsilon>0$ and $n\geq1$, define
  $G_{n,\epsilon} (t) = na_n^{-2} \epsilon^\alpha \esp[Y^2\,\1{\{Y \leq a_n
    \epsilon\}}\,\1{\{U \leq t\}}]$.  It follows that~(\ref{eq:burk1})
  and~(\ref{eq:burk2}) hold with $p=2$, $\gamma=1$ and
  $F_{n}=\sup_{0<\epsilon\leq1}G_{n,\epsilon}=G_{n,1}$. Moreover, by
  Assumption~(\ref{eq:asympt-indep}) and Karamata's Theorem, we have
  $\lim_{n\to\infty} G_{n,1} (t) = t$. Thus we can apply
  \autoref{coro:burkholder} to obtain~(\ref{eq:tightness<epsilon-array}).
\end{enumerate}
By \autoref{theo:conv-stable-D-array}, the previous steps imply that $S_n$
converges weakly in $(\mathcal D,J_1)$ to a bivariate stable process which can
be expressed as $[Z,Z(1)]$, where $Z$ is a totally skewed to the right
$\alpha$-stable L\'evy process.
\end{proof}

For completeness, we  state the following result. 
\begin{proposition} 
  \label{prop:fidi} 
  Under Assumption~(\ref{eq:asympt-indep}), $a_T^{-1} \{A_T(\phi) -
  \esp[A_T(\phi)]\}$ converges weakly to a stable law which can be expressed as
  $\int_{-\infty}^\infty \{\phi(w) -\lambda\esp[Y\phi(W)]\} M(\mathrm d w)$,
  where $M$ is a totally skewed to the right stable random measure with
  control measure $G^*$, i.e.
  \begin{align}
     \label{eq:log-laplace}
     \log \esp \left[ \mathrm e^{\mathrm i t \int_{-\infty}^\infty \phi(w)
         M(\mathrm d w)} \right] = - |t|^\alpha \lambda c_\alpha
     \esp[|\phi(W^*)|^\alpha] \{ 1 + \mathrm i \, \mathrm{sign}(t)\beta(\phi)
     \tan(\phi\alpha/2)\} \; ,
  \end{align}
  where $W^*$ is a random variable with distribution $G^*$, $\beta(\phi) =
  \esp[\phi_+^\alpha(W^*)]/\esp[|\phi(W^*)|^\alpha]$.
\end{proposition}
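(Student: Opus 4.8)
The plan is to reduce $A_T(\phi)-\esp[A_T(\phi)]$ to a single randomly indexed sum of i.i.d.\ centered, regularly varying variables, and then to combine a scalar stable limit theorem with an Anscombe-type argument for the random index $N(T)$. Since $R(s)=W_{N(s)}$ is piecewise constant, writing $S_k=\sum_{j=1}^k Y_j$ we have
\[
A_T(\phi)=\sum_{i=1}^{N(T)}Y_i\phi(W_i)+(T-S_{N(T)})\,\phi(W_{N(T)+1})\;.
\]
Set $c=\lambda\esp[Y\phi(W)]=\int\phi\,\rmd F_0$ and $V_i=Y_i\{\phi(W_i)-c\}$. Because $\lambda\esp[Y]=1$, these are i.i.d.\ and centered, $\esp[V_i]=\esp[Y\phi(W)]-c\,\esp[Y]=0$. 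The elementary identity $\sum_{i=1}^{N(T)}V_i=\sum_{i=1}^{N(T)}Y_i\phi(W_i)-c\,S_{N(T)}$ together with $S_{N(T)}=T-(T-S_{N(T)})$ is the heart of the reduction.

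First I would show that both the boundary term $(T-S_{N(T)})\phi(W_{N(T)+1})$ and the discrepancy $c\,(T-S_{N(T)})$ are $o_P(a_T)$. For $\alpha\in(1,2)$ the backward recurrence time $T-S_{N(T)}$ converges in distribution to the (proper) stationary age law, hence is tight; since $\phi(W_{N(T)+1})$ is tight as well and $a_T\to\infty$, both products are negligible. This yields $A_T(\phi)-cT=\sum_{i=1}^{N(T)}V_i+o_P(a_T)$. It then remains to replace $cT$ by $\esp[A_T(\phi)]$: by generalized Wald, $\esp[\sum_{i=1}^{N(T)}Y_i\phi(W_i)]=\esp[N(T)]\,\esp[Y\phi(W)]$, and the renewal-function correction $\esp[N(T)]-\lambda T=o(a_T)$ holds because for $\alpha\in(1,2)$ one has $T^{2-\alpha}=o(T^{1/\alpha})$; hence $\esp[A_T(\phi)]=cT+o(a_T)$ and
\[
a_T^{-1}\{A_T(\phi)-\esp[A_T(\phi)]\}=a_T^{-1}\sum_{i=1}^{N(T)}V_i+o_P(1)\;.
\]

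Next I would identify the tail of the centered summand $V=Y\tilde\phi(W)$ with $\tilde\phi=\phi-c$. Applying the vague convergence~(\ref{eq:asympt-indep}) to the map $(y,w)\mapsto y\tilde\phi(w)$ gives $n\,\pr(V>a_n x)\to x^{-\alpha}\esp[\tilde\phi_+^\alpha(W^*)]$ and the symmetric statement for the left tail, so $V$ is regularly varying of index $\alpha$ with tail balance governed by $\beta(\tilde\phi)$ and overall scale $\esp[|\tilde\phi(W^*)|^\alpha]$; this requires $\esp[|\phi(W^*)|^\alpha]<\infty$ and $\phi$ continuous $G^*$-a.e.\ so that the map is a.e.\ continuous for $\mu_\alpha\otimes G^*$ and the relevant uniform integrability holds. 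Consequently $V$ lies in the domain of attraction of an $\alpha$-stable law, and deterministically $a_n^{-1}\sum_{i=1}^n V_i\Rightarrow\xi$, where $\xi$ is $\alpha$-stable with log-characteristic function $-|t|^\alpha c_\alpha\,\esp[|\tilde\phi(W^*)|^\alpha]\{1+\rmi\,\mathrm{sign}(t)\beta(\tilde\phi)\tan(\pi\alpha/2)\}$; the constant $c_\alpha$, with $c_\alpha^\alpha=\Gamma(1-\alpha)\cos(\pi\alpha/2)$, is precisely the one produced when centering regularly varying summands with $1<\alpha<2$.

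Finally, since $N(T)/T\to\lambda$ almost surely and the partial-sum process converges to a stable Lévy motion in $(\di,J_1)$, Anscombe's theorem gives $a_{\lambda T}^{-1}\sum_{i=1}^{N(T)}V_i\Rightarrow\xi$, whence $a_T^{-1}\sum_{i=1}^{N(T)}V_i\Rightarrow\lambda^{1/\alpha}\xi$ using $a_{\lambda T}\sim\lambda^{1/\alpha}a_T$. The scaling coefficient of $\lambda^{1/\alpha}\xi$ is then $\lambda c_\alpha\,\esp[|\tilde\phi(W^*)|^\alpha]$, which matches exactly the coefficient appearing in~(\ref{eq:log-laplace}) for $\int\tilde\phi\,\rmd M$ when $M$ is the totally right-skewed stable measure with control $G^*$; since $\tilde\phi=\phi-\lambda\esp[Y\phi(W)]$, this identifies the limit as $\int\{\phi(w)-\lambda\esp[Y\phi(W)]\}\,M(\rmd w)$, as claimed. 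The main obstacle I anticipate is the random index: a naive decomposition would produce a second $a_T$-order contribution from the fluctuation of $N(T)-\lambda T$, and the crux is that the built-in centering $c\,Y_i$ inside $V_i$, combined with the tightness of the age $T-S_{N(T)}$, absorbs it exactly so that a single stable term survives; the renewal correction $\esp[N(T)]-\lambda T=o(a_T)$ and the domain-of-attraction verification under mere asymptotic independence are the remaining technical points.
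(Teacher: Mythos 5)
The paper states \autoref{prop:fidi} \emph{without proof} (it is given ``for completeness''), so there is no in-text argument to compare against; the closest thing is the proof of the empirical-process theorem in the same subsection, which runs the bivariate functional machinery of \autoref{theo:conv-stable-D-array}. Your route is genuinely different and more elementary: you reduce everything to the single randomly indexed i.i.d.\ sum $\sum_{i\le N(T)}V_i$ with $V_i=Y_i\{\phi(W_i)-c\}$, apply the classical one-dimensional stable CLT for centered regularly varying summands, and finish with an Anscombe/random-time-change step. This is sound, and it is worth noting that your single centering is algebraically the same device the paper uses: summing $V_i=Y_i\phi(W_i)-cY_i$ up to $N(T)$ splits it as $\sum\{Y_i\phi(W_i)-\esp[Y\phi(W)]\}-c\{S_{N(T)}-\lambda^{-1}N(T)\}$, which is exactly the two-term decomposition in~(\ref{eq:main-empiric-2}) specialized to a fixed argument. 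What your approach buys is that no functional convergence or point-process argument is needed for a purely one-dimensional limit; what the paper's machinery buys is the uniform (in $w$) statement, which your scalar argument cannot give.

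Two steps deserve to be made explicit rather than asserted. First, the tail identification $n\,\pr(Y\tilde\phi(W)>a_nx)\to x^{-\alpha}\esp[\tilde\phi_+^\alpha(W^*)]$ does \emph{not} follow from~(\ref{eq:asympt-indep}) by the mapping theorem alone when $\phi$ is unbounded, because the set $\{(y,w):y\tilde\phi(w)>x\}$ is not bounded away from $\{0\}\times\mathbb{R}$; a Breiman-type cutting argument and a condition slightly stronger than $\esp[|\phi(W^*)|^\alpha]<\infty$ are needed (you flag this, and the paper is equally silent, so it is a shared, not a new, gap). Second, your ``generalized Wald'' step is stated for $N(T)$, which is not a stopping time; applying Wald to $N(T)+1$ produces the extra term $\esp[Y_{N(T)+1}\phi(W_{N(T)+1})]$ involving the size-biased straddling interarrival, whose expectation grows like a regularly varying function of index $2-\alpha$ even though the age $T-S_{N(T)}$ is tight in distribution. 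Your exponent comparison $2-\alpha<1/\alpha$ for $\alpha\in(1,2)$ is precisely what kills it, but that estimate must be applied to this expectation, not only to $\esp[N(T)]-\lambda T$. With these points spelled out, the argument is complete and yields the stated scale $\lambda c_\alpha\esp[|\tilde\phi(W^*)|^\alpha]$ via $a_{\lambda T}\sim\lambda^{1/\alpha}a_T$, as claimed.
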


\appendix
\section{A useful lemma}
\begin{lemma}
  \label{lem:poissonthing}
  Let $\{\Gamma_i,\,i\geq1\}$ be the  points of a unit rate homogeneous Poisson
  point process on $[0,\infty)$. Then for any $1<\alpha<p\leq2$, we have
$$
\esp\left[\left(\sum_{i=4}^\infty
\Gamma_i^{-p/\alpha}\right)^2\right] <\infty \;.
$$
\end{lemma}
\begin{proof}
  Observe that
$$\sum_{i=4}^\infty
\Gamma_i^{-p/\alpha} \leq 
\sum_{i=1}^\infty
\Gamma_i^{-p/\alpha} \1{\{\Gamma_i\geq1\}}+\sum_{i=4}^\infty
\Gamma_i^{-p/\alpha} \1{\{\Gamma_i<1\}}\;.
$$
The first term has finite second moment since $\int_1^\infty x^{-kp/\alpha}\rmd
x<\infty$ for $k=1,2$. The second term satisfies
$$
\sum_{i=4}^\infty
\Gamma_i^{-p/\alpha} \1{\{\Gamma_i<1\}}\leq \Gamma_4^{-p/\alpha}
\sum_{i=4}^\infty \1{\{\Gamma_i<1\}} \leq
\Gamma_4^{-p/\alpha}\left(1+\sum_{i=5}^\infty
  \1{\{\Gamma_i-\Gamma_4<1\}}\right) \;.
$$ 
Since $2p/\alpha<4$ we have $\esp[\Gamma_4^{-2p/\alpha}]<\infty$. Since  $\sum_{i=5}^\infty
  \1{\{\Gamma_i-\Gamma_4<1\}}$ is a Poisson variable independent of $\Gamma_4$,
  the proof is concluded.
\end{proof}

% \bibliographystyle{plainnat} 
% \bibliography{bib}

\end{document}